\DeclareMathOperator{\lcm}{\rm{lcm}}
\newtheorem{teor}{Theorem}
\newtheorem{cor}{Corollary}
\newtheorem{prop}{Proposition}
\newtheorem{lem}{Lemma}
\theoremstyle{definition}
\newtheorem{defi}{Definition}
\newtheorem{rem}{Remark}
\newtheorem{con}{Conjecture}
\renewcommand{\subjclassname}{AMS \textup{2010} Mathematics Subject
Classification\ }
\author{Jos\'{e} Mar\'{i}a Grau}
\address{Departamento de Matemáticas, Universidad de Oviedo\\ Avda. Calvo Sotelo s/n, 33007 Oviedo, Spain}
\email{grau@uniovi.es}
\author{Antonio M. Oller-Marc\'{e}n}
\address{Centro Universitario de la Defensa de Zaragoza\\ Ctra. Huesca s/n, 50090 Zaragoza, Spain} \email{oller@unizar.es}
\author{Jonathan Sondow}
\address{209 West 97th Street\\New York, NY 10025, USA} \email{jsondow@alumni.princeton.edu}
\title[On the congruence $1^m + 2^m + \dotsb + m^m\equiv n \pmod{m}$ with $n\mid m$]{On the congruence\\$\boldsymbol{1^m + 2^m + \dotsb + m^m\equiv n \pmod{m}}$ with $\boldsymbol{n\mid m}$}
\begin{document}

\begin{abstract}
We show that if the congruence above holds and $n\mid m$, then the quotient $Q:=m/n$ satisfies $\sum_{p\mid Q} \frac{Q}{p}+1 \equiv 0\pmod{Q}$, where $p$ is prime. The only known solutions of the latter congruence are $Q=1$ and the eight known primary pseudoperfect numbers $2,6,42, 1806, 47058, 2214502422, 52495396602,$ and $8490421583559688410706771261086$. Fixing $Q$, we prove that the set of positive integers $n$ satisfying the congruence in the title, with $m=Q n$, is empty in case  $Q=52495396602$, and in the other eight cases has an asymptotic density between bounds in  $(0,1)$ that we provide.
\end{abstract}

\maketitle
\subjclassname{11B99, 11A99, 11A07}

\keywords{Keywords: Power sum, congruence, Erd\H{o}s-Moser equation, asymptotic density}

\section{Introduction}

This paper deals with {\em power sums} of the form
$$S_m(k):=1^m+2^m+3^m+\cdots+k^m,$$
where $m,k\in  \mathbb{N}:=\{1,2,3,\dotsc\}$. Power sums were first studied in detail by Jakob Bernoulli (1654-1705), leading him to develop the {\em Bernoulli numbers}, as they are known today. In fact, if we denote by $B_i$ and $B_i(x)$ the $i$-th Bernoulli number and Bernoulli polynomial, then (see, e.g., \cite{BEA})
\begin{equation}\label{int}S_m(m)= \frac{B_{m+1}(m+1)-B_{m+1}}{m+1}.\end{equation}

In particular, much work has been done regarding divisibility properties of power sums \cite{GMO,LEN,SON2}.
%S_m(k)=\frac{1}{m+1}\sum_{i=0}^{m} \binom{m+1}{i} (k+1)^{m+1-i} B_i.

The general Diophantine equation 
$$S_m(x)=y^n$$
was considered by Sch\H{a}ffer \cite{SCH} in 1956. In particular he proved that this equation has infinitely many solutions in positive integers $x$ and $y$ if and only if $(m,n)\in\{(1,2),(3,2),(3,4),(5,2)\}$. Moreover, for $m\geq 1$ and $n\geq 2$ he conjectured that if the equation has finitely many solutions, then the only nontrivial solution (i.e., with $(x,y)\neq (1,1)$) is given by the case $(m,n,x,y)=(2,2,24,70)$. Jacobson, Pint\'{e}r and Walsh \cite{JAC} verified the conjecture for $n=2$ and even $m\leq 58$. Bennett, Gy\H{o}ry and Pint\'{e}r \cite{BEN} have proved it for $m\leq 11$ and arbitrary $n$. 

Also related to power sums we mention the {\em Erd\H{o}s-Moser equation}, which is the Diophantine equation
\begin{equation}
\label{eq}
S_m(k)=(k+1)^m.
\end{equation}
In a 1950 letter to Moser, Erd\H{o}s conjectured that solutions to this equation do not exist, except for the trivial solution $1^1+2^1=3^1$. Three years later, Moser \cite{MOS} proved the conjecture for odd $k$ or $m < 10^{10^6}$. Since then, much work on the Erd\H{o}s-Moser equation has been done, but it has not even been proved that there are only finitely many solutions. For surveys of work on this and related problems, see \cite{BUT,MOR} and \cite[Section D7]{Guy}.

Recently, Sondow and MacMillan studied modular versions of the Erd\H{o}s-Moser equation (\ref{eq}), in particular, the congruences
\begin{equation}\label{equ}S_m(k)\equiv (k+1)^m \pmod{k}\end{equation}
and
\begin{equation*}S_m(k)\equiv (k+1)^m\pmod{k^2}.\end{equation*}
Among other results, they proved the following. (Here and throughout the paper, $p$ denotes a prime.)
\begin{teor}[Sondow and MacMillan \cite{SON} ] \label{thmMS}
The congruence \eqref{equ} holds if and only if $p\mid k$ implies $m\equiv 0\pmod{p-1}$ and $\frac{k}{p}+1\equiv 0 \pmod{p}$. In that case, $k$ is square-free, and if $n$ is odd, then $k = 1$ or $2$.
\end{teor}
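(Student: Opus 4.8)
The plan is to first simplify the right-hand side: since $k+1\equiv 1\pmod k$, the congruence \eqref{equ} is equivalent to $S_m(k)\equiv 1\pmod k$. I would then analyze this one prime at a time. Fix a prime $p\mid k$ and write $p^a\,\|\,k$. Grouping the $k$ summands of $S_m(k)$ according to their residue class modulo $p$ — each class occurring exactly $k/p$ times because $p\mid k$, and $j^m$ depending only on $j\bmod p$ — gives
\begin{equation*}
S_m(k)\equiv \frac{k}{p}\sum_{r=1}^{p-1} r^m \pmod p.
\end{equation*}

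Next I would invoke the classical evaluation of the power sum modulo a prime. Using a primitive root $g$ modulo $p$, the sum $\sum_{r=1}^{p-1} r^m\equiv\sum_{i=0}^{p-2}(g^m)^i$ vanishes as a geometric series when $(p-1)\nmid m$ (so that $g^m\not\equiv 1$), and equals $p-1\equiv -1$ when $(p-1)\mid m$. Feeding this into the displayed congruence, the hypothesis $S_m(k)\equiv 1\pmod p$ (forced by $p\mid k$) rules out the case $(p-1)\nmid m$, since it would give $0\equiv 1\pmod p$; and in the remaining case it yields $-\frac{k}{p}\equiv 1\pmod p$, that is, $\frac{k}{p}+1\equiv 0\pmod p$. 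The pleasant point — and the step I would emphasize — is that this single congruence simultaneously delivers square-freeness: $\frac{k}{p}\equiv -1\not\equiv 0\pmod p$ forces $a=1$. Thus no appeal to power sums modulo $p^a$ for $a\ge 2$ is needed.

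For the converse and the final clauses I would run the argument backwards. If $(p-1)\mid m$ and $\frac{k}{p}+1\equiv 0\pmod p$ for every $p\mid k$, then each $p$ divides $k$ exactly once, so $k$ is square-free; the displayed congruence gives $S_m(k)\equiv -\frac{k}{p}\equiv 1\pmod p$ for each $p\mid k$, and the Chinese Remainder Theorem (applicable precisely because $k$ is square-free) assembles these into $S_m(k)\equiv 1\pmod k$, i.e.\ \eqref{equ}. Finally, if the exponent $m$ is odd, then for an odd prime $p\mid k$ the condition $(p-1)\mid m$ is impossible, since the even number $p-1$ cannot divide the odd number $m$; hence $k$ has no odd prime factor, and being square-free it must equal $1$ or $2$ (both of which one checks do satisfy \eqref{equ}).

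The main obstacle here is conceptual rather than computational: recognizing that reduction modulo the prime $p$, rather than modulo the prime power $p^a$ or the full modulus $k$, already encodes all the necessary information and, on its own, forces square-freeness. Once this is seen, the only genuine technical input is the standard primitive-root evaluation of $\sum_{r=1}^{p-1}r^m\bmod p$, and the remaining assembly is routine via the Chinese Remainder Theorem.
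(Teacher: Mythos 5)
Your proof is correct and complete: the reduction of \eqref{equ} to $S_m(k)\equiv 1\pmod{k}$, the grouping of summands by residue class modulo $p$, the primitive-root evaluation of $\sum_{r=1}^{p-1}r^m \bmod p$, the derivation of square-freeness from $\frac{k}{p}\equiv -1\not\equiv 0\pmod{p}$, and the CRT assembly of the converse all hold up, as does the parity argument for the final clause. Note that the paper itself gives no proof of this statement --- it is quoted from Sondow and MacMillan \cite{SON} --- but your argument is essentially the one in that reference, and it matches the toolkit this paper does develop: your grouping step is exactly Lemma~\ref{LEM:GMO}~i) specialized to $d=p$, combined with the $\bmod\ p$ case of Lemma~\ref{LEM:GMO}~ii).
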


In the present paper we are interested in power sums of the form $S_m(m)$. Observe that if we put $m=k$ in equation \eqref{equ} we obtain $S_m(m)\equiv 1\pmod{m}$ where, obviously, $1$ divides $m$. This observation leads to the main goal of this paper, namely, the study of the congruence
\begin{equation}
\label{eqS_m(m)}
S_m(m)\equiv n\pmod{m},\ {\rm with}\ n\mid m.
\end{equation}
%with $n$ a divisor of $m$.

In particular we prove that if \eqref{eqS_m(m)} holds, then the quotient $Q:=m/n$ satisfies the congruence $$\sum_{p\mid Q} \frac{Q}{p} +1\equiv 0\pmod{Q}.$$
There are nine known positive integers that satisfy this congruence:
%\begin{align*}
%Q  =\ &1, 2, 6, 42, 1806, 47058, 2214502422, 52495396602,\\
%	&8490421583559688410706771261086.
%\end{align*}
$Q=1, 2,$ $6,42, 1806,$ $47058,$ $2214502422,$ $52495396602,8490421583559688410706771261086.$ We show that for each of these values of $Q$, the set of solutions $n$ to$$S_{Q n}(Q n)\equiv n\pmod{Q n}$$has positive asymptotic density strictly less than 1, except for $Q=52495396602$ when the set of solutions is empty.
%, and $Q=8490421583559688410706771261086$ when the density lies in the interval $[0,10^{-30}]$.

For instance, for $Q=1$ (i.e., if $m=n$), the set of solutions to the congruence $S_n(n)\equiv n\equiv 0\pmod{n}$ is precisely the set of odd positive integers (as proved in \cite{GMO}), whose asymptotic density is $1/2$. For $Q=2$ (i.e., if $m=2n$), the set of solutions to $S_{2n}(2n)\equiv n\pmod{2n}$
is $\{1, 2, 4, 5, 7, 8, 11, 13, 14, 16, 17, 19, 22, 23, 25, 26, 28,\dotso\}$ (see \cite[Sequence A229303]{OEIS}).

It is interesting to observe that Bernoulli's formula (\ref{int})
% and the fact that
%$$S_m(m)= \frac{1}{m+1}\big(B_{m+1}(m+1)-B_{m+1}\big)$$
would allow us to restate our results in terms of Bernoulli numbers. Nevertheless, we will not do so.

% SECTION 1
\section{The congruence $S_m(m) \equiv 1 \pmod{m}$}\label{SEC:SPE}

Let us define a set $\mathfrak{S}$ in the following way:
$$\mathfrak{S}:=\left\{m \in \mathbb{N}: S_m(m) \equiv 1 \pmod{m}\right\}.$$
The main goal of this section is to characterize the set $\mathfrak{S}$. In particular, we will see that it consists of just five elements.

We first prove two lemmas.

\begin{lem}
\label{LEM:primes}
Let $\mathcal{P}$ be a non-empty set of primes $p$ such that
\begin{itemize}
\item[i)] $p-1$ is square-free, and
\item[ii)] if $q$ is a prime divisor of $p-1$, then $q\in\mathcal{P}$.
\end{itemize}
Then $\mathcal{P}$ is one of the sets $\{2\},\{2,3\},\{2,3,7\},$ or $\{2,3,7,43\}.$
\end{lem}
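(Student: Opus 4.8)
The plan is to list the elements of $\mathcal{P}$ in increasing order as $p_1<p_2<\cdots$ and to pin them down one at a time, showing that the list cannot reach a fifth term.

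First I would establish that $p_1=2$. Since $\mathcal{P}$ is non-empty it has a least element $p_1$; if $p_1$ were odd then $2\mid p_1-1$, so (ii) would force $2\in\mathcal{P}$ with $2<p_1$, contradicting minimality. Hence $p_1=2$.

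The engine of the argument is the following remark: for any $p_i$ in the list, every prime divisor of $p_i-1$ belongs to $\mathcal{P}$ by (ii) and is strictly smaller than $p_i$ (it divides $p_i-1<p_i$), hence lies in $\{p_1,\dots,p_{i-1}\}$. Together with (i) this says that $p_i-1$ is a square-free product of some of the primes $p_1,\dots,p_{i-1}$, while $p_i$ itself must be a prime exceeding $p_{i-1}$. Applying this repeatedly:
\begin{itemize}
\item if $p_2$ exists, $p_2-1$ is a square-free power of $2$, forcing $p_2-1=2$ and $p_2=3$;
\item if $p_3$ exists, $p_3-1$ is a square-free product of a subset of $\{2,3\}$ with $p_3-1\ge4$; the only option is $6$, so $p_3=7$ (the value $5$ is automatically excluded, since $5-1=4$ is not square-free);
\item if $p_4$ exists, $p_4-1$ is a square-free product of a subset of $\{2,3,7\}$ with $p_4-1\ge10$; the candidates are $14,21,42$, and only $42+1=43$ is prime, so $p_4=43$.
\end{itemize}

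Finally I would rule out a fifth element. If $p_5$ existed, $p_5-1$ would be a square-free product of a subset of $\{2,3,7,43\}$ with $p_5-1\ge46$ and $p_5$ prime; the candidate products are $86,129,258,301,602,903,1806$, and each successor $87,130,259,302,603,904,1807$ is composite---the decisive case being $1807=13\cdot139$. Hence $\mathcal{P}$ has no fifth element, is therefore finite, and according to where the chain stops it equals exactly one of $\{2\},\{2,3\},\{2,3,7\},\{2,3,7,43\}$. Each step is short; the only real work is the finite bookkeeping in the last two items, namely listing the square-free subset-products and testing primality of their successors, with the termination of the chain hinging on the factorization of $1807$.
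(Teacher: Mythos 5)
Your proof is correct, and it runs the induction in the opposite direction from the paper. The paper argues top-down: given any odd prime $p\in\mathcal{P}$, it builds the strictly decreasing sequence $s_1=p$, $s_j=\max\{\text{prime }q : q\mid s_{j-1}-1\}$, which stays inside $\mathcal{P}$ and must terminate at $2$; working backwards up that chain forces $3,7,43$, and a fifth level is excluded by listing the primes of the form $43t+1$ with $t<43$ (namely $173, 431, 947, 1033, 1291, 1549, 1721$) and checking that each violates condition i) or condition ii). Your bottom-up enumeration $p_1<p_2<\cdots$ uses the same engine (every prime factor of $p_i-1$ is a smaller element of $\mathcal{P}$, and $p_i-1$ is square-free, so $p_i-1$ is a subset product of $\{p_1,\dots,p_{i-1}\}$), but it buys a cleaner termination step: your candidate list for $p_5-1$ is the tighter set of subset products $\{86,129,258,301,602,903,1806\}$ rather than all multiples $43t$ with $t<43$, so you only need to verify that the seven successors $87,130,259,302,603,904,1807$ are composite --- no primality search and no appeal back to conditions i) and ii) is required at the last stage (the paper, by contrast, must e.g.\ note that $172=4\cdot 43$ is not square-free and that $430=2\cdot 5\cdot 43$ has the factor $5\notin\mathcal{P}$). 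Both arguments conclude with the same observation that condition ii) forces $\mathcal{P}$ to be downward closed along $2,3,7,43$, so the answer is a prefix of that chain.
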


\begin{proof}
Since $\mathcal{P}$ is non-empty, condition ii) implies that $2\in\mathcal{P}$.

If there exists an odd prime $p \in \mathcal{P}$, then we can define a finite sequence of primes recursively by
$$s_1=p,$$
$$s_{j}=\max\{\textrm{prime }q: q \mid s_{j-1}-1\}.$$
This sequence is contained in $\mathcal{P}$ and, since it is strictly decreasing, there exists $j$ such that $s_j=2$.
Then $s_{j-1}=2h+1$ and since 2 is the biggest prime dividing $s_{j-1}-1$, condition i) implies $h=1$ and hence $s_{j-1}=3$. If $3<p$, then in the same way we get that $s_{j-2}=3l+1$ with $l=1$ or $2$, but since $s_{j-2}$ is prime, $l=2$ and $s_{j-2}=7$. If $7<p$, then one step further leads us to $s_{j-3}=43$.

Now, if $43<p$, then $s_{j-4}=43t+1$ with $t<43$. But the only primes in $\{43t+1:t<43\}$ are $173, 431, 947, 1033, 1291, 1549, 1721$, and $173, 1033, 1549, 1721$ do not satisfy condition i), while $431, 947, 1291$ do not satisfy condition ii). Thus, none of them belongs to $\mathcal{P}$, a contradiction. Hence $43=p$. Therefore, in all cases the sequence $\{s_j\}$ has at most three elements and the only possible values for $s_1=p$ are $3,7,43$. This proves the lemma.
\end{proof}

\begin{rem}
Note that $2, 3, 7$, and $43$ are precisely the primes in \cite[Sequence A227007]{OEIS}.
\end{rem}

\begin{lem}
\label{LEM:N}
Let $\mathcal{N}$ be a set of positive integers $\nu$ such that
\begin{itemize}
\item[i)] $\nu$ is square-free, and
\item[ii)] if $p$ is a prime divisor of $\nu$, then $p-1$ divides $\nu$.
\end{itemize}
Then $\mathcal{N}\subseteq\{1,2,6,42,1806\}.$
\end{lem}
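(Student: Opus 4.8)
The plan is to reduce this lemma to Lemma~\ref{LEM:primes} by passing from the integer $\nu$ to its set of prime divisors. First I would dispose of the trivial case $\nu=1$, which belongs to the target set $\{1,2,6,42,1806\}$ and has no prime divisors to constrain. So fix an arbitrary $\nu\in\mathcal{N}$ with $\nu>1$, and let $\mathcal{P}$ denote the (non-empty) set of prime divisors of $\nu$. The goal is to show that $\mathcal{P}$ satisfies the two hypotheses of Lemma~\ref{LEM:primes}, so that $\mathcal{P}$ is forced to be one of the four listed prime sets.

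The heart of the argument is the verification of conditions i) and ii) for $\mathcal{P}$, and this is where the hypotheses on $\mathcal{N}$ are used. Take any $p\in\mathcal{P}$. By hypothesis ii) on $\mathcal{N}$, we have $p-1\mid\nu$. Since $\nu$ is square-free by hypothesis i), every divisor of $\nu$ is itself square-free; in particular $p-1$ is square-free, which is condition i) of Lemma~\ref{LEM:primes}. For condition ii), suppose $q$ is a prime dividing $p-1$. Then $q\mid p-1\mid\nu$, so $q$ is a prime divisor of $\nu$, that is, $q\in\mathcal{P}$. Thus $\mathcal{P}$ meets both requirements, and Lemma~\ref{LEM:primes} yields $\mathcal{P}\in\bigl\{\{2\},\{2,3\},\{2,3,7\},\{2,3,7,43\}\bigr\}$.

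To finish I would invoke square-freeness once more: because $\nu$ is square-free, it equals the product of its distinct prime divisors, i.e. $\nu=\prod_{p\in\mathcal{P}}p$. Running through the four possibilities for $\mathcal{P}$ gives $\nu=2,\,6,\,42,$ or $1806$ respectively. Combining this with the case $\nu=1$ shows $\mathcal{N}\subseteq\{1,2,6,42,1806\}$, as claimed.

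This proof is short precisely because Lemma~\ref{LEM:primes} already carries the combinatorial weight. The only point demanding care—and the step I would flag as the genuine (if modest) obstacle—is the transfer of the square-free condition: one must notice that hypothesis ii) on $\mathcal{N}$ forces $p-1$ to be a \emph{divisor} of the square-free number $\nu$, so that the square-freeness of $p-1$ required by Lemma~\ref{LEM:primes} comes for free rather than needing a separate assumption. Note that the reverse inclusion (that all five values actually lie in some admissible $\mathcal{N}$) is not asserted and need not be checked.
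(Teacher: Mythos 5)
Your proof is correct, and it rests on the same key lemma (Lemma~\ref{LEM:primes}) as the paper's, but you apply it differently, and the difference is worth noting. The paper applies Lemma~\ref{LEM:primes} to the \emph{global} set $\mathcal{S}$ of all primes dividing \emph{some} element of $\mathcal{N}$; this yields only the containment $\mathcal{S}\subseteq\{2,3,7,43\}$, after which the paper finishes by ``direct inspection'': among the sixteen square-free products of primes from $\{2,3,7,43\}$, one checks which actually satisfy hypothesis ii), arriving at $\{1,2,6,42,1806\}$. You instead apply the lemma \emph{element-wise}, to the set $\mathcal{P}$ of prime divisors of a single fixed $\nu>1$. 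This exploits the full strength of Lemma~\ref{LEM:primes} --- that $\mathcal{P}$ must be exactly one of the four nested sets $\{2\},\{2,3\},\{2,3,7\},\{2,3,7,43\}$, not merely a subset of the largest one --- and then square-freeness gives $\nu=\prod_{p\in\mathcal{P}}p\in\{2,6,42,1806\}$ at once, with no case-check. The verification of the lemma's hypotheses is identical in both arguments (the transfer of square-freeness from $\nu$ to its divisor $p-1$, which you rightly flag as the one point needing care), so your variant is arguably tidier: it replaces the paper's concluding inspection with a one-line computation, at no extra cost.
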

\begin{proof}
Let us define the set
$$\mathcal{S}:=\{\textrm{prime }p: p \mid \nu\ \textrm{for some $\nu \in \mathcal{N}$}\},$$
If $\mathcal{S}\neq\emptyset$, consider $p\in\mathcal{S}$. By condition ii) we have that $p-1\mid\nu$ for some $\nu\in\mathcal{N}$. By condition i) $\nu$ is square-free and since $p-1$ divides $\nu$, then $p-1$ itself is square-free. Moreover, if $q\mid p-1$ is prime, then also $q\mid \nu$. Hence, we have just seen that $\mathcal{S}$ satisfies both conditions in Lemma \ref{LEM:primes} so we conclude that $\mathcal{S}\subseteq\{2,3,7,43\}$.

Since the elements of $\mathcal{N}$ are square-free and satisfy condition ii), it is enough to proceed by direct inspection to observe that the only possible elements of $\mathcal{N}$ are $1,2,6,42,1806$, as claimed.
\end{proof}

In \cite{ERD} the set $\{2,6,42,1806\}$ was characterized as the only square-free solutions to $G(n)=\varphi(n)$, where $\varphi(n)$ is Euler's totient function and $G(n)$ stands for the number of groups (up to isomorphism) of order $n$. In \cite{KEL} it is proved that $n=1806$ is the only value for which the denominator of $B_n$ equals $n$. In \cite{DB} and \cite[Proposition 2]{ARX} several different characterizations of the set $\{1,2,6,42,1806\}$ were given. Here we present one more in terms of divisibility properties of power sums. %Namely, we have the following.

\begin{prop} \label{PROP: M}
The set $\mathfrak{S}$ consists of five elements, namely,
$$\mathfrak{S}=\{1,2,6,42,1806\}.$$
\end{prop}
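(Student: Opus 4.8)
The plan is to show that every $m\in\mathfrak{S}$ satisfies the two hypotheses of Lemma \ref{LEM:N} --- namely that $m$ is square-free and that $p-1\mid m$ for every prime $p\mid m$ --- so that $\mathfrak{S}\subseteq\{1,2,6,42,1806\}$, and then to verify directly that each of these five integers actually lies in $\mathfrak{S}$.

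The single computational tool underlying everything is the reduction of $S_m(m)$ modulo a prime $p\mid m$. Since $p\mid m$, the integers $1,2,\dots,m$ split into exactly $m/p$ consecutive blocks of length $p$, each running through all residues modulo $p$, so that
$$S_m(m)\equiv \frac{m}{p}\sum_{j=1}^{p-1}j^m\pmod p.$$
By Fermat's little theorem (via a primitive root, say), $\sum_{j=1}^{p-1}j^m\equiv -1\pmod p$ when $(p-1)\mid m$ and $\equiv 0\pmod p$ otherwise. I would use this as follows. First, if some $p^2\mid m$, then $p\mid m/p$, forcing $S_m(m)\equiv 0\pmod p$; but $m\in\mathfrak{S}$ gives $S_m(m)\equiv 1\pmod m$, hence $\equiv 1\pmod p$, a contradiction. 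So $m$ is square-free. Second, for square-free $m$, if some prime $p\mid m$ had $(p-1)\nmid m$, the displayed congruence would again give $S_m(m)\equiv 0\not\equiv 1\pmod p$, another contradiction. Hence both conditions of Lemma \ref{LEM:N} hold and $\mathfrak{S}\subseteq\{1,2,6,42,1806\}$.

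For the reverse inclusion I would assemble the local information by the Chinese Remainder Theorem. When $m$ is square-free and $(p-1)\mid m$ for every $p\mid m$, the displayed congruence yields $S_m(m)\equiv -m/p\pmod p$; and since $p\mid m/q$ for every other prime $q\mid m$, this equals $S_m(m)\equiv -\sum_{p\mid m} m/p\pmod p$. Running over all $p\mid m$ and applying CRT gives the clean formula
$$S_m(m)\equiv -\sum_{p\mid m}\frac{m}{p}\pmod m.$$
Thus $m\in\mathfrak{S}$ exactly when $\sum_{p\mid m} m/p+1\equiv 0\pmod m$, and a direct check of this relation for $m=1,2,6,42,1806$ (for instance $21+14+6+1=42$ when $m=42$) completes the proof.

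I do not expect a genuine obstacle here; the only point requiring care is the evaluation of $\sum_{j=1}^{p-1}j^m\pmod p$ and the bookkeeping in the CRT step, where one must remember that the square-freeness of $m$ is exactly what makes $m/q\equiv 0\pmod p$ for $q\neq p$.
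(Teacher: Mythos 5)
Your proof is correct, and while it shares the paper's overall skeleton (establish the two hypotheses of Lemma \ref{LEM:N}, conclude $\mathfrak{S}\subseteq\{1,2,6,42,1806\}$, then verify the five candidates), it differs substantively in how both halves are executed. The paper gets square-freeness and $p-1\mid m$ for free by quoting Theorem \ref{thmMS} of Sondow--MacMillan (the congruence $S_m(m)\equiv 1\pmod m$ is exactly \eqref{equ} with $k=m$, since $(m+1)^m\equiv 1\pmod m$), and then disposes of the reverse inclusion by saying ``equality can be verified computationally.'' You instead prove the two conditions from scratch via the block decomposition $S_m(m)\equiv \frac{m}{p}\sum_{j=1}^{p-1}j^m\pmod p$ together with the standard evaluation of $\sum_{j=1}^{p-1}j^m\pmod p$ --- essentially reproving the needed fragment of Theorem \ref{thmMS} (compare Lemma \ref{LEM:GMO} i) and ii)) --- and, more interestingly, you replace the computational check by the closed-form congruence
$$S_m(m)\equiv -\sum_{p\mid m}\frac{m}{p}\pmod m,$$
valid for square-free $m$ with $p-1\mid m$ for all $p\mid m$. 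This reduces membership in $\mathfrak{S}$ to the primary pseudoperfect identity $\sum_{p\mid m}\frac{m}{p}+1\equiv 0\pmod m$, checkable by hand even for $m=1806$ (where the paper's route requires genuinely computing $S_{1806}(1806)\bmod 1806$). Your criterion is in effect the specialization of \cite[Corollary 5]{SON}, cited later in the paper, and it makes conceptually transparent the fact --- which the paper only observes after the proposition --- that $\mathfrak{S}$ consists of $1$ together with the primary pseudoperfect numbers $2, 6, 42, 1806$. The trade-off: the paper's proof is shorter given its quoted machinery; yours is self-contained and explains \emph{why} the answer is what it is. All steps check out, including the CRT assembly (square-freeness of $m$ is indeed what lets you pass from the prime moduli to the modulus $m$, and $p\mid m/q$ for $q\neq p$ is what collapses the sum modulo $p$).
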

\begin{proof}
Let $m\in\mathfrak{S}$, so that $S_m(m)\equiv 1\pmod{m}$. By Theorem~\ref{thmMS}, this implies that $m$ is square-free and that $p-1$ divides $m$ for every prime factor $p$ of $m$. Consequently, Lemma \ref{LEM:N} applies to obtain that $\mathfrak{S}\subseteq\{1,2,6,42,1806\}$. The proof is complete since equality can be verified computationally.
\end{proof}

\begin{rem}
The result in Proposition \ref{PROP: M} was stated without proof by Max Alekseyev in \cite[Sequence A014117]{OEIS} after the first draft of the present paper was written.
\end{rem}

We recall that an integer $n\ge2$ is called a \emph{primary pseudoperfect number} if it satisfies the equation
$$\sum_{p\mid n} \frac{n}{p}+1=n.$$
The only known primary pseudoperfect numbers are \cite{BUT}
$$2,6,42,1806,47058, 2214502422, 52495396602, 8490421583559688410706771261086.$$
It is not known whether there are infinitely many. We have just seen that the elements of $\mathfrak{S}$ are $1$ and the four primary pseudoperfect numbers $2,6,42,1806$. In the following section a family of positive integers closely related to primary pseudoperfect numbers will play a key role.

% SECTION 3
\section{The congruences $S_{Q n}(Q n)\equiv n \pmod{Q n}$}

For every $Q\in\mathbb{N}$ let us define the set
$$\mathfrak{N}_{Q}:=\{n\in\mathbb{N} :  S_{Q n}(Q n)\equiv n \pmod{Q n}\}.$$

The main goal of this section is to study the family of sets $\mathfrak{N}_{Q}$. We will make use of the following lemma \cite{GMO} several times.

\begin{lem} \label{LEM:GMO}
Let $d,k,m,$ and $t$ be positive integers.
\begin{itemize}
\item[i)] If $d$ divides $k$, then$$S_m(k)\equiv\frac{k}{d}\,S_m(d)\pmod{d}.$$
\item[ii)] Let $p^t$ be an odd prime power. Then
$$S_m(p^t)\equiv\begin{cases} -p^{t-1}\pmod{p^t}, & if\ p-1 \mid m;\\ 0\ \qquad \pmod{p^t}, & otherwise.\end{cases}$$
\item[iii)] We have
$$S_m(2^t)\equiv\begin{cases} 2^{t-1}\!\!\!\!\pmod{2^t},& \textrm{if\ $t=1$, or $t>1$ and $m>1$ is even};\\ -1\pmod{2^t}, & \textrm{if $t>1$ and $m=1$}; \\ 0\ \ \pmod{2^t}, & \textrm{if $t>1$ and $m>1$ is odd}.\end{cases}$$
\end{itemize}
\end{lem}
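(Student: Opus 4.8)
The plan is to treat the three parts in order, using throughout the elementary observation that $S_m(N):=\sum_{j=1}^{N}j^m$ depends, modulo $N$, only on the residues $j\bmod N$, so that $S_m(N)$ is the sum of $m$-th powers over a complete residue system mod $N$. For part i) I would partition $\{1,\dots,k\}$ into $k/d$ consecutive blocks of length $d$. In the block $\{id+1,\dots,id+d\}$ each term satisfies $(id+r)^m\equiv r^m\pmod{d}$, so every block contributes $S_m(d)\pmod{d}$; summing the $k/d$ identical contributions gives $S_m(k)\equiv\frac{k}{d}S_m(d)\pmod{d}$.

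For part ii) the key device is that for any unit $a$ modulo $p^t$ the map $j\mapsto aj$ permutes the residues, whence $S_m(p^t)\equiv a^mS_m(p^t)\pmod{p^t}$, i.e. $(a^m-1)S_m(p^t)\equiv0\pmod{p^t}$. When $p-1\nmid m$, choosing $a=g$ a primitive root modulo $p^t$ makes $g^m\not\equiv1\pmod{p}$, so $g^m-1$ is a unit and $S_m(p^t)\equiv0$, giving the second case. When $p-1\mid m$, I split the complete residue system into the units and the multiples of $p$, writing $S_m(p^t)=U+p^mS_m(p^{t-1})$ with $U=\sum_{\gcd(j,p)=1}j^m$. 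Since $(\mathbb{Z}/p^t\mathbb{Z})^\times$ is cyclic of order $\varphi(p^t)=p^{t-1}(p-1)$, the power map $u\mapsto u^m$ has image the unique subgroup $H$ of order $e:=\varphi(p^t)/\gcd(\varphi(p^t),m)$; the hypothesis $p-1\mid m$ forces $e=p^b$ to be a power of $p$, so $H=\{1+p^{t-b}c:0\le c<p^b\}$ and $\sum_{h\in H}h\equiv p^b\pmod{p^t}$. Multiplying by the kernel size $\gcd(\varphi(p^t),m)=(p-1)p^{t-1-b}$ yields $U\equiv(p-1)p^{t-1}\equiv-p^{t-1}\pmod{p^t}$. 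Finally the tail $p^mS_m(p^{t-1})$ vanishes mod $p^t$ by induction on $t$: the inductive value has $p$-adic valuation $t-2$, and $p-1\mid m$ forces $m\ge2$, so $v_p\bigl(p^mS_m(p^{t-1})\bigr)\ge m+t-2\ge t$; the base case $t=1$ is the classical congruence $S_m(p)\equiv-1\pmod{p}$.

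For part iii) the same engine applies, but $(\mathbb{Z}/2^t\mathbb{Z})^\times\cong\{\pm1\}\times\langle5\rangle$ is not cyclic for $t\ge3$, so I would compute the unit sum directly as $U=\sum_i\bigl((5^i)^m+(-5^i)^m\bigr)=(1+(-1)^m)\sum_i5^{im}$. The factor $1+(-1)^m$ produces the parity dichotomy: it vanishes for odd $m$, so the odd cases reduce to the tail $2^mS_m(2^{t-1})$ and are dispatched by induction, while for even $m$ one evaluates the geometric sum to obtain the stated $2^{t-1}$. The remaining case $m=1$ follows at once from Gauss's formula $S_1(2^t)=\tfrac12\,2^t(2^t+1)$, and the small moduli $t=1,2$ are checked by direct computation.

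I expect the main obstacle to be the two non-vanishing evaluations, namely pinning down the unit sum $U$ exactly modulo the full power $p^t$ (respectively $2^t$) rather than merely modulo $p$. This requires the cyclic (respectively $\{\pm1\}\times$ cyclic) structure together with careful bookkeeping of the subgroup generated by the $m$-th powers, and the accompanying verification that the multiple-of-$p$ tail really does vanish to the full power, which is precisely where the induction on $t$ and the bound $m\ge2$ enter.
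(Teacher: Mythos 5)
Your parts i) and ii) are correct and complete: the block decomposition for i), and, for ii), the primitive-root argument when $p-1\nmid m$ together with the split into units and multiples of $p$, the subgroup computation giving $U\equiv(p-1)p^{t-1}\equiv-p^{t-1}\pmod{p^t}$, and the induction on $t$ for the tail $p^mS_m(p^{t-1})$ (using $m\ge p-1\ge2$) all check out. For context, the paper itself gives no proof of this lemma at all --- it is quoted from the reference [GMO] --- so yours is necessarily an independent argument, and for i) and ii) it is a sound and standard one. Likewise, the machinery you set up for iii) (the $\{\pm1\}\times\langle5\rangle$ structure for $t\ge3$, the exact vanishing of the unit sum for odd $m$ via the factor $1+(-1)^m$, the geometric-sum evaluation $U\equiv2^{t-1}$ for even $m$, the inductive treatment of the tail $2^mS_m(2^{t-1})$, and direct checks for $t=1,2$) does work for the cases $m$ even and $m>1$ odd.

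The genuine problem is the case $t>1$, $m=1$ of iii). You assert that it ``follows at once from Gauss's formula $S_1(2^t)=\tfrac12\,2^t(2^t+1)$.'' It does not: that formula gives $S_1(2^t)=2^{t-1}(2^t+1)\equiv2^{t-1}\pmod{2^t}$, whereas the statement you are proving claims $S_1(2^t)\equiv-1\pmod{2^t}$. These are incompatible for every $t>1$: already for $t=2$ one has $S_1(4)=10\equiv2\pmod4$ while $-1\equiv3\pmod4$; indeed $S_1(2^t)$ is even and $-1$ is odd. So your own tool refutes the congruence you claim to have verified, and you did not notice the conflict. In fact the lemma as printed in the paper is erroneous in this one case --- the correct value is $2^{t-1}$, so the $m=1$ case should be merged into the first case of iii); this is harmless downstream, since the paper only ever applies iii) with $m=Qn$ even (or with $Qn=1$, where $t=1$). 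But a proof attempt must either establish the statement as written or explicitly flag it as false; claiming that Gauss's formula delivers the stated $-1$ is a concrete error, and it is the one real gap in an otherwise correct proposal.
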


The first step in our study is to see that $Q$ is square-free whenever $\mathfrak{N}_{Q}\neq\emptyset$.

\begin{prop}
\label{PROP:sqfree}
If $\mathfrak{N}_{Q}$ is non-empty, then $Q$ is square-free.
\end{prop}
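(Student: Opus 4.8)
The plan is to prove the contrapositive: assuming $Q$ is \emph{not} square-free, I will show that $\mathfrak{N}_Q$ must be empty. So suppose some prime $p$ satisfies $p^2 \mid Q$, and suppose for contradiction that there exists $n \in \mathfrak{N}_Q$, meaning $S_{Qn}(Qn) \equiv n \pmod{Qn}$. The natural move is to extract information modulo a suitable prime power. Since $p^2 \mid Q$ and $Q \mid Qn$, we certainly have $p^2 \mid Qn$, so I would reduce the congruence $S_{Qn}(Qn) \equiv n \pmod{Qn}$ down to a congruence modulo $p^2$, namely $S_{Qn}(Qn) \equiv n \pmod{p^2}$.

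The key computational tool is Lemma~\ref{LEM:GMO}(i), which lets me collapse the power sum over $Qn$ terms down to a power sum over $p^2$ terms: since $p^2 \mid Qn$, part (i) gives
\begin{equation}\label{eq:collapse}
S_{Qn}(Qn) \equiv \frac{Qn}{p^2}\, S_{Qn}(p^2) \pmod{p^2}.
\end{equation}
Now I evaluate $S_{Qn}(p^2)$ using parts (ii) and (iii) of the lemma with the prime power $p^t = p^2$ (so $t=2$). In every relevant case the value of $S_{Qn}(p^2) \bmod p^2$ is a multiple of $p^{t-1} = p$: indeed part (ii) gives either $-p$ or $0$, and part (iii) (for $p=2$, $t=2$, with exponent $m = Qn$ which is even since $2 \mid Q$) gives $2$, again a multiple of $p = 2$. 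So in all cases $S_{Qn}(p^2) \equiv 0 \pmod{p}$, and feeding this back into \eqref{eq:collapse} along with the extra factor $\frac{Qn}{p^2}$ should force $S_{Qn}(Qn) \equiv 0 \pmod{p^2}$ — or at least $\equiv 0 \pmod{p}$, which is what I actually need.

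Combining this with the reduced congruence yields $n \equiv 0 \pmod{p}$, i.e. $p \mid n$. But then $p^2 \mid Q$ and $p \mid n$ together give $p^3 \mid Qn$, which lets me iterate the same argument one level deeper: reducing modulo $p^3$ and collapsing via Lemma~\ref{LEM:GMO}(i) through $S_{Qn}(p^3)$, whose value modulo $p^3$ is a multiple of $p^{t-1} = p^2$, I would obtain a stronger divisibility on $n$. The cleanest way to organize this is probably to argue that $p^k \mid n$ for every $k$ by induction on the $p$-adic valuation, which is impossible for a fixed positive integer $n$, giving the contradiction; alternatively one shows directly that the congruence modulo $p^2$ forces $n \equiv 0 \pmod{p}$ while an independent comparison of $p$-adic valuations of the two sides of $S_{Qn}(Qn)\equiv n$ is inconsistent.

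The main obstacle I anticipate is bookkeeping the powers of $p$ carefully. The factor $\frac{Qn}{p^2}$ in \eqref{eq:collapse} may itself contribute additional factors of $p$ (since $p^2 \mid Q$ means $\frac{Q}{p^2}$ could still be divisible by $p$, and $p \mid n$ once we know that), so I must track $v_p(\tfrac{Qn}{p^2})$ together with $v_p(S_{Qn}(p^2))$ to be sure the product genuinely forces the divisibility I claim on $n$, rather than being absorbed trivially. The parity condition needed to apply Lemma~\ref{LEM:GMO}(iii) in the case $p=2$ (checking that the exponent $Qn$ is even, which holds because $2 \mid Q$ forces $Qn$ even) must also be verified, and the case $p=2$ should be handled separately from odd $p$ throughout to keep the application of parts (ii) versus (iii) clean.
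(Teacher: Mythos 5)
Your plan is correct, and it rests on the same ingredients as the paper's proof: reduce the congruence modulo a suitable power of $p$, collapse the power sum with Lemma~\ref{LEM:GMO}~i), and evaluate with ii)/iii). The difference is in organization. The paper works once, at the exact $p$-part of $Qn$: writing $p^s$ and $p^r$ for the exact powers of $p$ dividing $Q$ and $n$, it reduces modulo $p^{r+s}$, where the cofactor $\frac{Qn}{p^{r+s}}$ is coprime to $p$; the left side is then a $p$-unit times $p^{r+s-1}$ while the right side has valuation exactly $r$, forcing $r+s-1=r$, i.e.\ $s=1$ for every $p\mid Q$, with no contradiction or iteration needed. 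You instead assume $p^2\mid Q$ and run a descent, gaining one factor of $p$ in $n$ at each pass. Your descent does work: if $p^k\mid n$ and $p^2\mid Q$, then $p^{k+2}\mid Qn$, and reducing modulo $p^{k+2}$ gives $S_{Qn}(Qn)\equiv 0\pmod{p^{k+1}}$, hence $p^{k+1}\mid n$; unbounded divisibility is absurd. Moreover, the worry you flag about the cofactor $\frac{Qn}{p^j}$ carrying extra factors of $p$ is unfounded in your formulation: you only need a one-sided lower bound on the $p$-adic valuation of the left side, and extra factors of $p$ only help. That same observation lets you compress the descent to a single step, which essentially recovers the paper's argument: take $r=v_p(n)$ and reduce modulo $p^{r+2}$ (legitimate since $v_p(Qn)=r+s\geq r+2$); the left side is divisible by $p^{r+1}$, so $p^{r+1}\mid n$, contradicting the definition of $r$. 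A small advantage of your version is that, needing only divisibility of $S_{Qn}(p^j)$ by $p^{j-1}$, you never have to distinguish the zero and nonzero cases of Lemma~\ref{LEM:GMO}~ii), whereas the paper must first argue that $S_{Qn}(Qn)\not\equiv 0\pmod{p^{r+s}}$ in order to exclude the zero case.
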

\begin{proof}
Fix $n\in\mathfrak{N}_{Q}$. As $1$ is square-free, we may assume that $Q>1$. Given $p\mid Q$, let $p^s$ ($s\geq 1$) be the greatest power of $p$ dividing $Q$. Let $p^r$ ($r\geq 0$) be the greatest power of $p$ dividing $n$. Since $n\in\mathfrak{N}_{Q}$, we have $S_{Q n}(Q n)\equiv n\pmod{Q n}$, and hence $S_{Q n}(Q n)\equiv n\pmod{p^{r+s}}$. Since $s\geq 1$ and $p^{r+1}\nmid n$, we get $S_{Qn}(Qn)\not\equiv 0\pmod{p^{r+s}}$. Hence, as Lemma~\ref{LEM:GMO} i) gives $S_{Q n}(Q n)\equiv \frac{Q n}{p^{r+s}}S_{Q n}(p^{r+s})\pmod{p^{r+s}}$, we get $S_{Q n}(p^{r+s})\not\equiv 0\pmod{p^{r+s}}$. Now Lemma~\ref{LEM:GMO} ii) and iii) yield $S_{Q n}(p^{r+s})\equiv \pm p^{r+s-1}\pmod{p^{r+s}}$, where the sign depends on the parity of $p$. Thus, $\pm\frac{Q n}{p^{r+s}}p^{r+s-1}\equiv n\pmod{p^{r+s}}$, and
in either case, since $\frac{Q n}{p^{r+s}}$ is coprime to $p$ and the greatest power of $p$ dividing $n$ is $p^{r}$, we get that $r+s-1=r$, so that $s=1$ as claimed.
\end{proof}

Before we present our main theorem we need to prove the following easy lemma.

\begin{lem}
\label{LEM:even}
Let $Q$ and $n$ be positive integers such that $n$ is even and $n\in\mathfrak{N}_{Q}$. Then $Q$ is also even.
\end{lem}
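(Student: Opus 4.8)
The plan is to argue by contradiction: assume $n$ is even with $n\in\mathfrak{N}_{Q}$ but $Q$ is odd, and extract a contradiction by examining the congruence $S_{Qn}(Qn)\equiv n\pmod{Qn}$ at the prime $2$. Write $m=Qn$ throughout. First I would isolate the exact power of $2$ dividing $n$: let $2^{a}$ be the largest power of $2$ dividing $n$, so $a\geq 1$ since $n$ is even. Because $Q$ is assumed odd, $Q$ contributes no factor of $2$, and hence $2^{a}$ is also exactly the largest power of $2$ dividing $m=Qn$; in particular $2^{a}\mid m$.

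Next I would reduce the defining congruence modulo $2^{a}$. Since $2^{a}\mid m$, the congruence $S_{m}(m)\equiv n\pmod{m}$ yields $S_{m}(m)\equiv n\pmod{2^{a}}$, and because $2^{a}\mid n$ this says $S_{m}(m)\equiv 0\pmod{2^{a}}$. On the other hand, Lemma~\ref{LEM:GMO} i) applied with $d=2^{a}$ gives $S_{m}(m)\equiv\frac{m}{2^{a}}\,S_{m}(2^{a})\pmod{2^{a}}$. To evaluate $S_{m}(2^{a})$ I would invoke Lemma~\ref{LEM:GMO} iii). Note that $m=Qn$ is even (as $n$ is even) and $m\geq n\geq 2>1$, so in the case $a>1$ the relevant branch is ``$t>1$ and $m>1$ even,'' giving $S_{m}(2^{a})\equiv 2^{a-1}\pmod{2^{a}}$; and in the case $a=1$ the branch ``$t=1$'' gives $S_{m}(2)\equiv 2^{0}=2^{a-1}\pmod{2}$. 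Thus both cases are covered uniformly by $S_{m}(2^{a})\equiv 2^{a-1}\pmod{2^{a}}$.

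Finally I would combine these facts. The cofactor $\frac{m}{2^{a}}=Q\cdot\frac{n}{2^{a}}$ is a product of odd numbers (since $Q$ is odd and $2^{a}$ is the full power of $2$ in $n$), hence odd; and an odd multiple of $2^{a-1}$ is congruent to $2^{a-1}$ modulo $2^{a}$. Therefore $S_{m}(m)\equiv 2^{a-1}\pmod{2^{a}}$. Since $a\geq 1$ forces $2^{a-1}\not\equiv 0\pmod{2^{a}}$, this contradicts the relation $S_{m}(m)\equiv 0\pmod{2^{a}}$ obtained above. Hence $Q$ cannot be odd, proving the lemma.

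The argument is short and the steps are routine once the setup is in place; the only point requiring care is the uniform treatment of $a=1$ and $a>1$ in Lemma~\ref{LEM:GMO} iii). The mild subtlety there is verifying that the hypotheses of the ``$2^{a-1}$'' branch hold—specifically that $m=Qn$ is even and exceeds $1$—which is exactly where the evenness of $n$ is used, and which is what makes the two cases collapse into the single clean congruence $S_{m}(2^{a})\equiv 2^{a-1}\pmod{2^{a}}$.
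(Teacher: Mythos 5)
Your proof is correct and takes essentially the same route as the paper's: argue by contradiction, isolate the exact power $2^a$ of $2$ dividing $n$ (hence dividing $Qn$, since $Q$ is assumed odd), and play Lemma~\ref{LEM:GMO}~i) and iii) against the congruence $S_{Qn}(Qn)\equiv n\equiv 0\pmod{2^a}$. The only difference is cosmetic: the paper divides out the odd cofactor $Qn/2^a$ to get $S_{Qn}(2^a)\equiv 0\pmod{2^a}$ and contradicts Lemma~\ref{LEM:GMO}~iii) directly, whereas you carry the cofactor along and contradict the two congruences for $S_{Qn}(Qn)$.
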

\begin{proof}
Assume on the contrary that $2^r$ ($r>0$) is the greatest power of $2$ dividing $n$ and that $Q$ is odd. Then $n\in\mathfrak{N}_{Q}$ and Lemma \ref{LEM:GMO} i) imply that $S_{Q n}(2^r)\equiv 0\pmod{2^r}$. But that contradicts Lemma \ref{LEM:GMO} iii).
\end{proof}

We are now in a position to characterize the pairs $(Q,n)$ such that $n\in\mathfrak{N}_Q$. In what follows we denote the set of primes dividing an integer $k$ by
$$\mathcal{P}(k) :=\{\text{prime }p: p\mid k\}.$$

\begin{teor}
\label{THEO}
Let $Q$ and $n$ be positive integers. Then $n\in\mathfrak{N}_{Q}$ if and only if the following conditions both hold.
\begin{itemize}
\item[i)] If $p\in\mathcal{P}(Q)$, then $p-1\mid Q n$ and $\frac{Q}{p}+1\equiv 0 \pmod {p}$.
\item[ii)] If $p\in\mathcal{P}(n)$ but $p\not\in\mathcal{P}(Q)$, then $p-1\nmid Q n$.
\end{itemize}
\end{teor}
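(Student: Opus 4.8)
The plan is to reduce the single congruence modulo $Qn$ to a family of congruences modulo prime powers via the Chinese Remainder Theorem, and then to evaluate each local power sum with Lemma~\ref{LEM:GMO}. First I would observe that both sides of the claimed equivalence force $Q$ to be square-free: the forward implication by Proposition~\ref{PROP:sqfree}, and condition~i) because $p^2\mid Q$ would give $p\mid \frac{Q}{p}$, whence $\frac{Q}{p}+1\equiv 1\not\equiv 0\pmod p$. Hence I may assume throughout that $Q$ is square-free, so that for each prime $p\mid Qn$ the exact power of $p$ dividing $Qn$ is $p^{1+\beta}$ if $p\mid Q$ and $p^{\beta}$ if $p\nmid Q$, where $p^{\beta}$ denotes the exact power of $p$ dividing $n$. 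Writing $m=Qn$, the membership $n\in\mathfrak{N}_Q$ is then equivalent, by the Chinese Remainder Theorem, to the simultaneous validity of $S_m(m)\equiv n\pmod{p^{a}}$ for every prime $p\mid m$, where $p^{a}$ is the exact power of $p$ dividing $m$.

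The key computation is the local evaluation of $S_m(m)$. Applying Lemma~\ref{LEM:GMO}~i) with $d=p^{a}$ gives $S_m(m)\equiv \frac{m}{p^{a}}S_m(p^{a})\pmod{p^{a}}$, and then Lemma~\ref{LEM:GMO}~ii) and iii) evaluate $S_m(p^{a})$. Here I would record the uniform statement that, whenever $p-1\mid m$, one has $S_m(p^{a})\equiv \epsilon_p\,p^{a-1}\pmod{p^{a}}$ with $\epsilon_p=-1$ for odd $p$ and $\epsilon_p=+1$ for $p=2$; the point for $p=2$ is that $p\mid m$ forces $m$ to be even, so the first branch of Lemma~\ref{LEM:GMO}~iii) applies. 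Multiplying through, this collapses to the clean formula $S_m(m)\equiv \epsilon_p\frac{m}{p}\pmod{p^{a}}$ when $p-1\mid m$, while $S_m(m)\equiv 0\pmod{p^{a}}$ when $p-1\nmid m$ (which can occur only for odd $p$, since $2-1=1$ divides everything).

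It then remains to match $\epsilon_p\frac{m}{p}$ (resp.\ $0$) against $n$ modulo $p^{a}$ in each of the two cases. If $p\mid Q$, so $a=1+\beta$: the case $p-1\nmid m$ gives $S_m(m)\equiv 0$, impossible since $p^{1+\beta}\nmid n$; and the case $p-1\mid m$ reduces, after cancelling the exact factor $p^{\beta}$ of $n$, to $\epsilon_p\frac{Q}{p}\equiv 1\pmod p$, which is exactly $\frac{Q}{p}+1\equiv 0\pmod p$. Thus the local congruence at $p$ holds iff condition~i) holds at $p$. If instead $p\mid n$ but $p\nmid Q$, so $a=\beta$ and $n\equiv 0\pmod{p^{\beta}}$: when $p-1\mid m$ the valuation of $\epsilon_p\frac{m}{p}$ is exactly $\beta-1$, so $S_m(m)\not\equiv 0$, while when $p-1\nmid m$ we get $S_m(m)\equiv 0\equiv n$; hence the local congruence holds iff $p-1\nmid m$, which is exactly condition~ii) at $p$. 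Combining all primes via the Chinese Remainder Theorem yields the equivalence.

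The routine parts are the two valuation matchings in the last paragraph; the step requiring the most care is the uniform local evaluation, specifically tracking the sign $\epsilon_p$ so that the $p=2$ case produces the same congruence $\frac{Q}{p}+1\equiv 0\pmod p$ as odd $p$, and confirming that every case of Lemma~\ref{LEM:GMO}~iii) that can arise is the one with $m$ even. I would double-check the boundary situation $p=2\mid n$, $2\nmid Q$ (where condition~ii) must fail because $1\mid Qn$): there $p-1=1\mid m$, so the computation forces $S_m(m)\not\equiv 0\equiv n\pmod{2^{\beta}}$, consistently predicting $n\notin\mathfrak{N}_Q$, in agreement with Lemma~\ref{LEM:even}.
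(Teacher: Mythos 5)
Your proof is correct, and its overall strategy is the same as the paper's: reduce the congruence $S_{Qn}(Qn)\equiv n\pmod{Qn}$ to congruences modulo prime powers and evaluate each local power sum with Lemma~\ref{LEM:GMO}. The differences are in the bookkeeping, and they are worth noting. The paper splits $Qn=(dn_1)n_2Q'$ into three pairwise coprime blocks according to whether a prime divides both $Q$ and $n$, only $Q$, or only $n$, and treats each block separately; for the block $n_2$ of primes dividing $n$ but not $Q$, it first invokes Lemma~\ref{LEM:even} to rule out $p=2$ and then cites an external result (\cite[Prop.~3]{GMO}, on $S_k(m)\bmod m$ for odd $m$) to conclude. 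You instead work prime by prime and keep the argument self-contained: your uniform formula $S_m(m)\equiv\epsilon_p\frac{m}{p}\pmod{p^a}$ when $p-1\mid m$ (with $\epsilon_p=-1$ for odd $p$, $\epsilon_p=+1$ for $p=2$), together with the valuation comparisons, replaces both \cite[Prop.~3]{GMO} and Lemma~\ref{LEM:even}; your check that the $p=2$ case yields the same congruence $\frac{Q}{2}+1\equiv 0\pmod 2$ as odd primes, and that $2\mid n$, $2\nmid Q$ is correctly excluded by condition~ii) since $1\mid Qn$, is exactly the point where care is needed, and you handle it correctly. Your observation that condition~i) by itself forces $Q$ to be square-free (so the converse direction needs no appeal to Proposition~\ref{PROP:sqfree}) matches the paper's treatment of the converse. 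The net effect is a modest gain in self-containedness at the cost of slightly more explicit case analysis; logically the two proofs are equivalent.
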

\begin{proof}
Assume that $n\in\mathfrak{N}_{Q}$, so that $S_{Q n}(Q n)\equiv n \pmod{Q n}$. Then Proposition~\ref{PROP:sqfree} implies that $Q$ is square-free. Hence, we can put
$$Q=\left(\prod_{p\in\mathcal{P}(Q)\cap\mathcal{P}(n)} p\right)\left(\prod_{p\in\mathcal{P}(Q)\setminus\mathcal{P}(n)} p\right):=dQ'$$
and
$$n=\left(\prod_{p\in\mathcal{P}(n)\cap\mathcal{P}(Q)} p^{r_p}\right)\left(\prod_{p\in\mathcal{P}(n)\setminus\mathcal{P}(Q)} p^{s_p}\right):=n_1n_2.$$
Observe that $d=\gcd(Q,n)$ and $Qn=(dn_1)n_2Q'$, and that $dn_1$, $n_2$, and $Q'$ are pairwise coprime. Consequently, $S_{Q n}(Q n)\equiv n \pmod{Q n}$ holds if and only if the following three congruences all hold:
\begin{itemize}
\item[a)] $S_{Q n}(Q n)\equiv n \pmod{n_2}$,
\item[b)] $S_{Q n}(Q n)\equiv n \pmod{Q'}$,
\item[c)] $S_{Q n}(Q n)\equiv n \pmod {dn_1}$.
\end{itemize}
Let us analyze each case separately. (Note that by Lemma \ref{LEM:even} the prime $2$ cannot appear in the decomposition of $n_2$.)
\begin{itemize}
\item[a)] $S_{Q n}(Q n)\equiv n \pmod{n_2}$ if and only if $S_{Q n}(Q n)\equiv 0 \pmod{n_2}$. This happens if and only if $S_{Qn}(Q n)\equiv 0 \pmod{p^{s_p}}$ for every $p\in\mathcal{P}(n)\setminus\mathcal{P}(Q)$. By Lemma \ref{LEM:GMO} i), this happens if and only if $S_{Q n}(p^{s_p})\equiv 0 \pmod{p^{s_p}}$. Now, in \cite[Prop. 3]{GMO} it is proved that, for odd $m$, $S_k(m)\equiv 0\pmod{m}$ if and only if $q-1$ does not divide $k$ for every $q$ prime divisor of $m$. Consequently, the previous congruence holds if and only if $p-1$ does not divide $Q n$.
\item[b)] Applying Lemma \ref{LEM:GMO} i) we get that $S_{Q n}(Q n)\equiv n \pmod{Q'}$ if and only if $dS_{Qn}(Q')\equiv 1 \pmod{Q'}$, i.e., if and only if $dS_{Q n}(Q')\equiv 1 \pmod{p}$ for every $p\in\mathcal{P}(Q)\setminus\mathcal{P}(n)$. This is equivalent to $d\frac{Q'}{p}S_{Q n}(p)\equiv 1 \pmod{p}$ which, by Lemma \ref{LEM:GMO} ii), holds if and only if $p-1$ divides $Qn$ and $\frac{Q}{p}+1\equiv 0 \pmod{p}$.
\item[c)] Applying Lemma \ref{LEM:GMO} i) again and reasoning as in the previous cases, we get that $S_{Q n}(Q n)\equiv n \pmod{dn_1}$ if and only if $\frac{Q}{p}S_{Q n}(p^{r_p+1})\equiv p^{r_p} \pmod{p^{r_p+1}}$ for every $p\in\mathcal{P}(Q)\cap\mathcal{P}(n)$. Hence, $S_{Q n}(p^{r_p+1})\not\equiv 0 \pmod{p^{r_p+1}}$ and it is enough to apply Lemma \ref{LEM:GMO} ii) or iii).
\end{itemize}
To prove the converse, first observe that condition i) implies that $Q$ is square-free. Hence, we have the same decomposition $Qn=(dn_1)n_2Q'$ again. Now, since the implications in the points a), b) and c) were ``if and only if'', the result follows.
\end{proof}

In the previous section we proved that $1\in\mathfrak{N}_Q$ if and only if $Q\in\{1,2,6,42,1806\}$ and hence $Q=1$ or is a primary pseudoperfect number. We now introduce the following definition.

\begin{defi}
An integer $n\ge1$ is a \emph{weak primary pseudoperfect number} if it satisfies the congruence
$$\sum_{p\mid n}\frac{n}{p}+1\equiv 0\pmod{n}.$$
\end{defi}

In \cite[Corollary 5]{SON} it was proved that the congruence $S_m(k)\equiv 1 \pmod{k}$ holds if and only if $k$ is a weak primary pseudoperfect number and $\textrm{lcm}\{p-1 : \textrm{prime}\ p\mid k \}$ divides $m$.

Note that primary pseudoperfect numbers are trivially weak primary pseudoperfect numbers. Moreover, since the sum of the reciprocals of the first 58 primes is smaller than 2, it follows that, if it exists, a weak primary pseudoperfect number greater than $1$ which is not a primary pseudoperfect number must have at least 58 different prime factors, and so must be greater than $10^{110}$.

Theorem \ref{THEO} implies that the values of $Q$ such that $\mathfrak{N}_Q\neq\emptyset$ are weak primary pseudoperfect numbers.

\begin{cor}
\label{COR:NONEMP}
If $\mathfrak{N}_Q\neq\emptyset$, then $Q$ is a weak primary pseudoperfect number.
\end{cor}
\begin{proof}
Since $\mathfrak{N}_Q\neq\emptyset$, Theorem \ref{THEO} i) yields $\frac{Q}{p}+1\equiv 0\pmod{p}$ for every prime $p\mid Q$. Hence $Q$ is square-free and
$$\sum_{p\mid Q}\frac{Q}{p} +1\equiv \prod_{p\mid Q}\left(\frac{Q}{p}+1\right)\equiv 0\pmod{Q}$$
and the result follows.
\end{proof}

As we already pointed out, the only known weak primary pseudoperfect numbers $>1$ are the primary pseudoperfect numbers. Only eight of them are known. Hence, the only known possible values of $Q\neq 1$ that could make $\mathfrak{N}_Q\neq\emptyset$ are 2, 6, 42, 1806, 47058, 2214502422, 52495396602 and 8490421583559688410706771261086. We know from Section \ref{SEC:SPE} that $1\in\mathfrak{N}_Q$ if and only if $Q\in\{1,2,6,42,1806\}$ so, in these cases $\mathfrak{N}_Q\neq\emptyset$ and obviously $1=\min\mathfrak{N}_Q$. In the following proposition, we determine the minimal element of $\mathfrak{N}_Q$ when it exists, i.e., when $\mathfrak{N}_Q\neq\emptyset$.

\begin{prop}
\label{PROP:NQ}
Given a weak primary pseudoperfect number $Q$, define the integer
\begin{equation} \label{EQ:nQ}
\mathfrak{n}_Q:=\begin{cases}\lcm\left\{\frac{p-1}{\gcd(p-1,Q)}\ :\ p\mid Q\right\}, & \textrm{if $Q\neq 1$};\\ 1, & \textrm{if $Q=1$}.\end{cases}
\end{equation}
Then $\mathfrak{N}_Q=\emptyset$ if and only if $q-1\mid Q\mathfrak{n}_Q$ for some prime $q\mid \mathfrak{n}_Q$. Moreover, if $\mathfrak{N}_Q\neq\emptyset$, then $\mathfrak{n}_Q\mid n$ for every $n\in\mathfrak{N}_Q$ and, in particular, $\mathfrak{n}_Q=\min\mathfrak{N}_Q$.
\end{prop}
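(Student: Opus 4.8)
The plan is to read off a workable description of $\mathfrak N_Q$ from Theorem \ref{THEO} and then compute. Since $Q$ is weak primary pseudoperfect it is square-free and satisfies $\frac{Q}{p}+1\equiv 0\pmod p$ for every $p\mid Q$, so the second part of condition i) in Theorem \ref{THEO} is automatic. Hence, for our fixed $Q$, one has $n\in\mathfrak N_Q$ if and only if (a) $p-1\mid Qn$ for every $p\mid Q$, and (b) $p-1\nmid Qn$ for every prime $p\mid n$ with $p\nmid Q$. I would first recast (a) as a single divisibility. Writing $v_\ell$ for the $\ell$-adic valuation, $p-1\mid Qn$ says $v_\ell(n)\ge \max\{0,\,v_\ell(p-1)-v_\ell(Q)\}$ for every prime $\ell$, and the exponent on the right equals $v_\ell\!\bigl(\tfrac{p-1}{\gcd(p-1,Q)}\bigr)$. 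Therefore (a) holds if and only if $\tfrac{p-1}{\gcd(p-1,Q)}\mid n$ for all $p\mid Q$, i.e. if and only if $\mathfrak n_Q\mid n$. This already proves the divisibility in the ``moreover'' clause: every element of $\mathfrak N_Q$ is a multiple of $\mathfrak n_Q$.

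Next comes the key structural step: if $\mathfrak N_Q\neq\emptyset$ then $\mathfrak n_Q\in\mathfrak N_Q$. Indeed, pick $n\in\mathfrak N_Q$. Condition (a) holds for $\mathfrak n_Q$ trivially. For (b), let $p\mid\mathfrak n_Q$ be prime with $p\nmid Q$; since $\mathfrak n_Q\mid n$ we get $p\mid n$, so (b) for $n$ gives $p-1\nmid Qn$, and as $Q\mathfrak n_Q\mid Qn$ this forces $p-1\nmid Q\mathfrak n_Q$. Hence $\mathfrak n_Q$ satisfies both (a) and (b), so $\mathfrak n_Q\in\mathfrak N_Q$; together with the previous paragraph this gives $\mathfrak n_Q=\min\mathfrak N_Q$. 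In particular $\mathfrak N_Q=\emptyset$ if and only if $\mathfrak n_Q\notin\mathfrak N_Q$, which by (b) means precisely that some prime $p\mid\mathfrak n_Q$ with $p\nmid Q$ satisfies $p-1\mid Q\mathfrak n_Q$.

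It then remains to reconcile this with the stated criterion, which quantifies over \emph{all} primes $q\mid\mathfrak n_Q$ rather than only those with $q\nmid Q$. One direction is immediate. For the other I must show that a prime $q$ dividing both $\mathfrak n_Q$ and $Q$ cannot serve as a ``spurious'' witness while $\mathfrak N_Q\neq\emptyset$; note that for such $q$ the divisibility $q-1\mid Q\mathfrak n_Q$ holds automatically by (a), so what is really needed is the implication $\gcd(\mathfrak n_Q,Q)>1\Rightarrow\mathfrak N_Q=\emptyset$.

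I expect this last implication to be the main obstacle, and it is exactly here that the weak primary pseudoperfect hypothesis must be used essentially: it cannot be dropped, since for a general square-free $Q$ the reduction to (a) and (b) is invalid (for instance $Q=10$ has $\mathfrak n_Q=2$ and $\gcd(\mathfrak n_Q,Q)=2$, yet $\mathfrak N_{10}=\emptyset$ only because $\frac{Q}{p}+1\not\equiv 0\pmod p$). The route I would attempt is to observe that $q\mid\gcd(\mathfrak n_Q,Q)$ forces some $p_0\mid Q$ with $q^2\mid p_0-1$, and then to exploit the relations $\frac{Q}{p}\equiv -1\pmod p$ to locate a genuine witness prime $p\mid\mathfrak n_Q$, $p\nmid Q$, with $p-1\mid Q\mathfrak n_Q$, contradicting $\mathfrak n_Q\in\mathfrak N_Q$. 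The only case that arises among the known values is $q=2$ (as for $Q=52495396602$), where $2\mid\mathfrak n_Q$ yields $v_2(Q\mathfrak n_Q)\ge 2$, which one checks validates $p-1\mid Q\mathfrak n_Q$ for a small prime $p\mid\mathfrak n_Q$ forced into $\mathfrak n_Q$ by the weak primary pseudoperfect structure.
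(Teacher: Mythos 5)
Your argument, up to and including the equivalence
$$\mathfrak{N}_Q=\emptyset \iff \text{there is a prime } q\mid \mathfrak{n}_Q \text{ with } q\nmid Q \text{ and } q-1\mid Q\mathfrak{n}_Q,$$
is correct, and it is essentially the paper's own proof written out in full. The paper's proof consists of three sentences: that $p-1\mid Q\mathfrak{n}_Q$ for every $p\mid Q$ (your valuation computation, which also yields that Theorem~\ref{THEO}~i) is equivalent to $\mathfrak{n}_Q\mid n$), that Theorem~\ref{THEO}~i) forces $\mathfrak{n}_Q\mid n$ for all $n\in\mathfrak{N}_Q$ (your ``moreover'' clause), and that applying Theorem~\ref{THEO}~ii) to the candidate $n=\mathfrak{n}_Q$ completes the proof (your key step that $\mathfrak{N}_Q\neq\emptyset$ implies $\mathfrak{n}_Q\in\mathfrak{N}_Q$, hence $\mathfrak{n}_Q=\min\mathfrak{N}_Q$). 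Your unproved opening claim, that a weak primary pseudoperfect $Q$ is square-free and satisfies $\frac{Q}{p}+1\equiv 0\pmod{p}$ for all $p\mid Q$, is a routine CRT argument that the paper also uses tacitly, so no worry there.

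The reconciliation problem you raise in your last two paragraphs --- witness primes $q$ dividing $\gcd(Q,\mathfrak{n}_Q)$ --- is a genuine discrepancy, but you should know that the paper does not resolve it either: its proof establishes exactly the restricted criterion displayed above and silently identifies it with the stated one. Since $q\mid\gcd(Q,\mathfrak{n}_Q)$ automatically gives $q-1\mid Q\mathfrak{n}_Q$, the proposition as literally stated additionally asserts that $\gcd(Q,\mathfrak{n}_Q)>1$ forces $\mathfrak{N}_Q=\emptyset$, and this implication is proved nowhere in the paper; your sketched route for it (extracting a witness coprime to $Q$ from the weak primary pseudoperfect structure) is speculative and is not carried out. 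Note that in the single known case with $\gcd(Q,\mathfrak{n}_Q)>1$, namely $Q=52495396602$, the paper finds the coprime witness $q=5$ by direct computation, not by a general argument, and all of its other applications of the proposition likewise involve only primes coprime to $Q$. So the honest summary is: everything the paper actually proves, you have proved, by the same route; what you could not close is an imprecision in the statement itself, which disappears (for both your proof and the paper's) if the quantifier is restricted to primes $q\mid\mathfrak{n}_Q$ with $q\nmid Q$.
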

\begin{proof}
Clearly $p-1 \mid Q\mathfrak{n}_Q$ for every prime $p\mid Q$. Moreover, Theorem \ref{THEO} i) implies that if $n\in\mathfrak{N}_Q$, then $\mathfrak{n}_Q\mid n$. Applying Theorem \ref{THEO} ii) completes the proof.
\end{proof}

With this proposition we can analyze the four remaining values of $Q$ for which $\mathfrak{N}_Q$ could be non-empty.

\begin{prop}
%\begin{itemize}
%\item[i)] 
{\rm i)} If $Q\in\{47058, 2214502422, 8490421583559688410706771261086\}$, then $\mathfrak{N}_Q$ is non-empty.\\
%\item[ii)] 
{\rm ii)} The set $\mathfrak{N}_{52495396602}$ is empty.
%\end{itemize}
\end{prop}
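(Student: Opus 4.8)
The engine for the whole statement is Proposition~\ref{PROP:NQ}: for each primary pseudoperfect number $Q$ at hand I will compute the integer $\mathfrak{n}_Q$ defined in \eqref{EQ:nQ} and then decide whether some prime $q\mid\mathfrak{n}_Q$ with $q\nmid Q$ satisfies $q-1\mid Q\mathfrak{n}_Q$; by that proposition, $\mathfrak{N}_Q=\emptyset$ precisely when such a $q$ exists. Each $\mathfrak{n}_Q$ is obtained by factoring $p-1$ for every prime $p\mid Q$, forming the quotients $(p-1)/\gcd(p-1,Q)$, and taking their least common multiple. Since in all cases $Q$ is square-free and even with $2$ dividing $Q$ exactly once, it is convenient to write $v_2(k)$ for the exponent of $2$ in $k$ and to note that each odd prime $p\mid Q$ contributes exactly $v_2(p-1)-1$ to $v_2(\mathfrak{n}_Q)$; thus $\mathfrak{n}_Q$ is odd if and only if every odd prime factor of $Q$ is $\equiv3\pmod4$.

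For the two smallest cases in part i), $Q=47058=2\cdot3\cdot11\cdot23\cdot31$ and $Q=2214502422=47058\cdot47059$, a direct evaluation of \eqref{EQ:nQ} gives $\mathfrak{n}_Q=5$ (the value $5$ coming from $p=11$ and $p=31$, while every other prime contributes $1$). The only prime dividing $\mathfrak{n}_Q$ is $q=5$, which does not divide $Q$; but $q-1=4$ while $v_2(Q\mathfrak{n}_Q)=1$, so $4\nmid Q\mathfrak{n}_Q$. No admissible $q$ exists, and Proposition~\ref{PROP:NQ} yields $\mathfrak{N}_Q\neq\emptyset$.

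Part ii) is where the criterion is actually triggered. For $Q=52495396602=2\cdot3\cdot11\cdot17\cdot101\cdot149\cdot3109$ the decisive feature is the factor $17$: from $17-1=16$ and $v_2(Q)=1$ we get $v_2(\mathfrak{n}_Q)=3$, and carrying out \eqref{EQ:nQ} in full (with $11,101,149,3109$ contributing $5,\,2\cdot5^2,\,2\cdot37,\,2\cdot7\cdot37$, respectively) gives $\mathfrak{n}_Q=2^3\cdot5^2\cdot7\cdot37=51800$. Now $q=5$ divides $\mathfrak{n}_Q$ but not $Q$, and $q-1=4$ does divide $Q\mathfrak{n}_Q$ because $v_2(Q\mathfrak{n}_Q)=1+3=4$. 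Hence such a $q$ exists and $\mathfrak{N}_{52495396602}=\emptyset$.

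The remaining and hardest case is the third value in part i), the $31$-digit number $Q=8490421583559688410706771261086=2\cdot3\cdot11\cdot23\cdot31\cdot47059\cdot2217342227\cdot1729194951439$. The main obstacle is factoring $p-1$ for the two large primes $2217342227$ and $1729194951439$, which I would do computationally; this completes the evaluation of $\mathfrak{n}_Q$ and lists the primes $q$ that must be tested. The structural point, in contrast with $52495396602$, is that every odd prime factor of $Q$ is $\equiv3\pmod4$, so $v_2(\mathfrak{n}_Q)=0$ and therefore $v_2(Q\mathfrak{n}_Q)=1$. This at once eliminates every candidate $q\equiv1\pmod4$ (in particular $q=5$), since then $4\mid q-1$ but $4\nmid Q\mathfrak{n}_Q$. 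What remains is the finite check that no prime $q\equiv3\pmod4$ dividing $\mathfrak{n}_Q$ (and not dividing $Q$) has $q-1\mid Q\mathfrak{n}_Q$; once that passes, Proposition~\ref{PROP:NQ} gives $\mathfrak{N}_Q\neq\emptyset$.
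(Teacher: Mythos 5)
Your overall strategy is exactly the paper's: evaluate $\mathfrak{n}_Q$ from \eqref{EQ:nQ} and apply the criterion of Proposition~\ref{PROP:NQ}. For $Q=47058$ and $Q=2214502422$ your computation $\mathfrak{n}_Q=5$ and the check that $q=5$ fails the criterion (since $v_2(Q\mathfrak{n}_Q)=1$, so $4\nmid Q\mathfrak{n}_Q$) are correct and complete, and for $Q=52495396602$ your computation $\mathfrak{n}_Q=2^3\cdot5^2\cdot7\cdot37=51800$ together with $q=5$, $4\mid Q\mathfrak{n}_Q$ correctly gives emptiness; this is in fact more explicit than the paper, which only observes that $5$ and $8$ divide $\mathfrak{n}_Q$. (Your insertion of the side condition $q\nmid Q$ into the criterion is harmless and indeed is what the proof via Theorem~\ref{THEO}~ii) actually yields; it makes no difference here since $\gcd(Q,\mathfrak{n}_Q)=1$ in every case considered.)

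The gap is in the third value of part i), $Q_9=8490421583559688410706771261086$, on two counts. First, the factorization you state is wrong: the largest prime factor of $Q_9$ is $1729101023519$, not $1729194951439$ (the product you wrote is not equal to $Q_9$). Second, and more seriously, you never finish this case: you say the factorizations of $p-1$ for the two large primes would be done computationally, and that non-emptiness follows ``once'' the remaining finite check passes --- but the proposition is precisely the assertion that it passes, so deferring it leaves the claim unproved. The paper closes this by reporting the computation: $\mathfrak{n}_{Q_9}=5\times 100788283\times 78595501069=39607528021345872635$, where $100788283$ arises from $2217342226=2\cdot11\cdot100788283$ and $78595501069$ from $1729101023518=2\cdot11\cdot78595501069$. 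Your mod-$4$ observation is correct and does real work: every odd prime factor of $Q_9$ is $\equiv 3\pmod 4$, so $\mathfrak{n}_{Q_9}$ is odd, $v_2(Q_9\mathfrak{n}_{Q_9})=1$, and this eliminates $q=5$ and $q=78595501069\equiv 1\pmod 4$. But the remaining candidate $q=100788283\equiv 3\pmod 4$ still must be ruled out, e.g.\ by noting that $q-1=2\cdot 3^2\cdot 5599349$ is divisible by $9$, while $3$ divides $Q_9\mathfrak{n}_{Q_9}$ only to the first power, so $q-1\nmid Q_9\mathfrak{n}_{Q_9}$. Without these concrete computations, and starting from an incorrect factorization, part i) for $Q_9$ is not established.
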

\begin{proof}
%\begin{itemize}
%\item[i)]
{\rm i)} Using Proposition \ref{PROP:NQ}, since $47058=2\times 3\times 11\times 23\times 31$, it can be computed that $$\mathfrak{n}_{47058}=\lcm (1,1,5,1,5)=5.$$ 
In the same way we get that $\mathfrak{n}_{2214502422}=5$ and
\begin{align*}
\mathfrak{n}_{8490421583559688410706771261086}&=5\times100788283\times78595501069\\
&=39607528021345872635.
\end{align*}
To conclude it is enough to apply Proposition \ref{PROP:NQ}.\\
%\item[ii)] 
{\rm ii)} First of all, note that $Q=52495396602=2\times 3\times 11\times 17\times 101\times 149\times 3109$. By definition, both $5=\dfrac{10}{\gcd(10,52495396602)}$ and $8=\dfrac{16}{\gcd(16,52495396602)}$ divide $\mathfrak{n}_{52495396602}$. Hence, $5\mid \mathfrak{n}_{52495396602}$ and $4\mid 52495396602\times \mathfrak{n}_{52495396602}$, so that the second part of Proposition \ref{PROP:NQ} implies that $\mathfrak{N}_{52495396602}=\emptyset$ as claimed.\qedhere
%\end{itemize}
\end{proof}

It is quite surprising that the set $\mathfrak{N}_{52495396602}$ is empty. This fact implies that the converse of Corollary \ref{COR:NONEMP} is false. Hence we only know eight values of $Q$ for which $\mathfrak{N}_Q\neq\emptyset$. The following result summarizes this information.

\begin{prop}
The only known values of $Q$ for which $\mathfrak{N}_Q\neq\emptyset$ are $1, 2, 6, 42, 1806,$ $47058, 2214502422,$ and $8490421583559688410706771261086.$
\end{prop}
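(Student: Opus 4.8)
The plan is to assemble the proposition directly from the results already established in the paper, since it is essentially a bookkeeping statement that collects the outcomes of the preceding analysis. The claim is that the only \emph{known} values of $Q$ for which $\mathfrak{N}_Q\neq\emptyset$ are the eight listed integers. First I would invoke Corollary \ref{COR:NONEMP}, which forces any $Q$ with $\mathfrak{N}_Q\neq\emptyset$ to be a weak primary pseudoperfect number. This immediately restricts the candidate values to $Q=1$ together with the weak primary pseudoperfect numbers greater than $1$. Next I would recall the discussion following the definition of weak primary pseudoperfect numbers: the only \emph{known} such numbers exceeding $1$ are the eight known primary pseudoperfect numbers $2, 6, 42, 1806, 47058, 2214502422, 52495396602$, and $8490421583559688410706771261086$, since any other would need at least $58$ distinct prime factors and hence exceed $10^{110}$.

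Having narrowed the candidate set to these nine values ($Q=1$ and the eight primary pseudoperfect numbers), the next step is to determine for which of them $\mathfrak{N}_Q$ is actually non-empty. Here I would cite the results already proved: from Section \ref{SEC:SPE} we have $1\in\mathfrak{N}_Q$ precisely when $Q\in\{1,2,6,42,1806\}$, so these five values qualify. For the four remaining primary pseudoperfect numbers, the immediately preceding proposition settles the question computationally via Proposition \ref{PROP:NQ}: for $Q\in\{47058, 2214502422, 8490421583559688410706771261086\}$ the set $\mathfrak{N}_Q$ is non-empty, whereas $\mathfrak{N}_{52495396602}=\emptyset$. Combining these, exactly eight of the nine candidate values yield a non-empty $\mathfrak{N}_Q$, and these are precisely the eight integers in the statement.

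The genuinely delicate point, and the one I would be most careful to phrase correctly, is the word \emph{known}. Corollary \ref{COR:NONEMP} gives a necessary condition but not a sufficient one, and indeed the case $Q=52495396602$ shows the converse fails. Consequently I cannot assert that these eight are the \emph{only} values of $Q$ with $\mathfrak{N}_Q\neq\emptyset$ in an absolute sense: there might exist an as-yet-undiscovered weak primary pseudoperfect number (or an additional primary pseudoperfect number) that also produces a non-empty $\mathfrak{N}_Q$. The honest scope of the proposition is therefore conditional on current knowledge of weak primary pseudoperfect numbers, and the proof must make this dependence explicit rather than claim an unconditional classification. This is the main obstacle in the sense that it is where an overclaim could slip in; the arithmetic itself is entirely carried by the earlier lemmas and propositions.

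The proof would thus read roughly as follows. By Corollary \ref{COR:NONEMP}, if $\mathfrak{N}_Q\neq\emptyset$ then $Q$ is a weak primary pseudoperfect number. The only such numbers currently known are $Q=1$ and the eight known primary pseudoperfect numbers. Among these, we have shown $1\in\mathfrak{N}_Q$ for $Q\in\{1,2,6,42,1806\}$, and the preceding proposition establishes $\mathfrak{N}_Q\neq\emptyset$ for $Q\in\{47058,2214502422,8490421583559688410706771261086\}$ and $\mathfrak{N}_{52495396602}=\emptyset$. Hence among all presently known candidates the assertion holds, giving exactly the eight stated values.
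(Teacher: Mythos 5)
Your proposal is correct and matches the paper's reasoning exactly: the paper states this proposition without a separate proof, treating it as a summary of Corollary \ref{COR:NONEMP}, the list of known weak primary pseudoperfect numbers, the Section \ref{SEC:SPE} result that $1\in\mathfrak{N}_Q$ iff $Q\in\{1,2,6,42,1806\}$, and the immediately preceding proposition handling the four larger candidates (including the emptiness of $\mathfrak{N}_{52495396602}$). Your explicit caveat about the word ``known'' is precisely the qualification the paper itself makes in the surrounding discussion.
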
 

Next section deals with the asymptotic density of these sets.

% SECTION 4
\section{About the asymptotic density of $\mathfrak{N}_{Q}$}

In this section we focus on the known cases when $\mathfrak{N}_{Q}$ is non-empty. In particular we are interested in studying their asymptotic density, $\delta(\mathfrak{N}_Q)$, which we show exists. For instance, the case $Q=1$ was studied in \cite[Theorem 1]{GMO}, where it was proved that $\mathfrak{N}_1$ is the set of odd positive numbers and hence has asymptotic density $1/2$. 

Here, we study the cases when $Q$ is a weak primary pseudoperfect number and the previous fact will appear as a particular case. Since the elements of $\mathfrak{N}_Q$ are always multiples of $\mathfrak{n}_Q$, a description of the complement $\mathfrak{n}_Q\mathbb{N}\setminus \mathfrak{N}_{Q}$ will be useful. Recall that {\em $p$ denotes a prime number.}

\begin{prop} \label{PROP:complement}
Let $Q$ be a weak primary pseudoperfect number such that $\mathfrak{N}_Q\neq\emptyset$. Then
$$\mathfrak{n}_Q\mathbb{N}\setminus\mathfrak{N}_Q=\bigcup_{d\mid Q} W_d(Q),$$
where the sets $W_d(Q)$ are given by
$$W_d(Q):=\left\{Kp\frac{\mathfrak{n}_Q}{D}\frac{p-1}{d}:p\nmid Q,\ d \mid p-1,\ D=\gcd\left(\mathfrak{n}_Q,\frac{p(p-1)}{d}\right),\ K\in\mathbb{N}\right\}.$$
\end{prop}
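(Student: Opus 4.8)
The plan is to characterize the complement $\mathfrak{n}_Q\mathbb{N}\setminus\mathfrak{N}_Q$ by negating the conditions of Theorem \ref{THEO} applied to $n\in\mathfrak{n}_Q\mathbb{N}$. Since every element of $\mathfrak{N}_Q$ is a multiple of $\mathfrak{n}_Q$ (by Proposition \ref{PROP:NQ}), I would fix a multiple $n=\mathfrak{n}_Q\cdot K'$ and ask when it \emph{fails} to lie in $\mathfrak{N}_Q$. Recall that condition i) of Theorem \ref{THEO} is automatically satisfied for any multiple of $\mathfrak{n}_Q$: the divisibility $p-1\mid Qn$ holds for every $p\mid Q$ by the very definition \eqref{EQ:nQ} of $\mathfrak{n}_Q$, and the congruence $\frac{Q}{p}+1\equiv 0\pmod p$ holds because $Q$ is a weak primary pseudoperfect number. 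Hence the only way for such an $n$ to escape $\mathfrak{N}_Q$ is to violate condition ii): there must exist a prime $p$ with $p\in\mathcal{P}(n)$, $p\notin\mathcal{P}(Q)$, and $p-1\mid Qn$.

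The key step is therefore to translate ``there exists a prime $p\mid n$ with $p\nmid Q$ and $p-1\mid Qn$'' into membership in the explicit sets $W_d(Q)$. First I would fix such a bad prime $p$ and the divisor $d:=\gcd(p-1,Q)$, noting $d\mid Q$ and $d\mid p-1$. The condition $p-1\mid Qn$ then reduces, after dividing out $d$, to $\frac{p-1}{d}\mid \frac{Q}{d}n$; since $\frac{p-1}{d}$ is coprime to $\frac{Q}{d}$ (as $d$ absorbs exactly the common factor), this is equivalent to $\frac{p-1}{d}\mid n$. Writing $n=\mathfrak{n}_Q K'$ and setting $D:=\gcd\bigl(\mathfrak{n}_Q,\frac{p(p-1)}{d}\bigr)$, I would show that the joint requirements $p\mid n$ and $\frac{p-1}{d}\mid n$ force $n$ to be a multiple of $p\,\frac{\mathfrak{n}_Q}{D}\,\frac{p-1}{d}$, which is exactly the generator of $W_d(Q)$; conversely, any $n=Kp\frac{\mathfrak{n}_Q}{D}\frac{p-1}{d}$ is a multiple of $\mathfrak{n}_Q$ admitting $p$ as a witness to the failure of condition ii). The factor $\frac{\mathfrak{n}_Q}{D}$ is precisely what is needed to guarantee that $\mathfrak{n}_Q$ still divides $n$ while simultaneously incorporating the extra factors of $p$ and $\frac{p-1}{d}$ not already present in $\mathfrak{n}_Q$.

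Having established both inclusions for a single $d$, I would take the union over all $d\mid Q$, since the bad prime $p$ can have any value of $\gcd(p-1,Q)$ dividing $Q$; the union over $p\nmid Q$ with $d\mid p-1$ is already built into the definition of $W_d(Q)$. This yields
$$\mathfrak{n}_Q\mathbb{N}\setminus\mathfrak{N}_Q=\bigcup_{d\mid Q}W_d(Q),$$
as claimed. The main obstacle I anticipate is the coprimality bookkeeping in the reduction of $p-1\mid Qn$ to $\frac{p-1}{d}\mid n$ and the verification that $p\,\frac{\mathfrak{n}_Q}{D}\,\frac{p-1}{d}$ really is the least common multiple of the constraints $\mathfrak{n}_Q\mid n$, $p\mid n$, and $\frac{p-1}{d}\mid n$: one must track carefully how the prime $p$ and the factors of $\frac{p-1}{d}$ overlap with $\mathfrak{n}_Q$, which is exactly the role of $D=\gcd\bigl(\mathfrak{n}_Q,\frac{p(p-1)}{d}\bigr)$. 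A subtle point to handle cleanly is whether $p$ itself already divides $\mathfrak{n}_Q$, and whether parts of $\frac{p-1}{d}$ do; organizing the argument through the single gcd $D$ rather than treating $p$ and $\frac{p-1}{d}$ separately is what keeps the statement uniform across all these cases.
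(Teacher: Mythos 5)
Your proposal is correct and follows essentially the same route as the paper: negate condition ii) of Theorem \ref{THEO} (condition i) being automatic for multiples of $\mathfrak{n}_Q$), set $d=\gcd(p-1,Q)$, and absorb the constraint $\mathfrak{n}_Q\mid n$ via the gcd $D$. The paper organizes the bookkeeping by writing $Qn=Ap(p-1)$ and extracting $n=B\,\frac{p(p-1)}{d}$ with $B=K\frac{\mathfrak{n}_Q}{D}$, while you phrase the same step as taking the least common multiple of the constraints $\mathfrak{n}_Q\mid n$, $p\mid n$, $\frac{p-1}{d}\mid n$; these are the same argument.
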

\begin{proof}
Let $n\in\mathbb{N}$ such that $n\not\in\mathfrak{N}_Q$. Then, due to Theorem \ref{THEO} ii), there exists a prime $p\nmid Q$ such that $p\mid n$ and $p-1\mid Qn$. This implies that $Qn=Ap(p-1)$ for some $A\in\mathbb{N}$. Hence, if we put $d=\gcd(Q,p-1)$ we have that $n=B\frac{p(p-1)}{d}$, where $B=dA/Q$ is an integer because $p\nmid Q$. Finally, since we want $n$ to be a multiple of $\mathfrak{n}_Q$ it follows that $B=K\frac{\mathfrak{n}_Q}{D}$ as claimed.
\end{proof}

When $\mathfrak{n}_Q=1$, then $D=1$ and Proposition~\ref{PROP:complement} can be particularized in the following way.

\begin{cor}
\label{COR:cor1}
Let $Q\in\{1,2,6,42,1806\}$. Then
$$\mathbb{N}\setminus\mathfrak{N}_{Q}=\bigcup_{d\mid Q} W_d(Q),$$
where
$$W_d(Q)=\left\{K\frac{p(p-1)}{d}:p\nmid Q,\ d \mid p-1,\ K\in\mathbb{N}\right\}.$$
\end{cor}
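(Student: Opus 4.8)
The plan is to specialize Proposition~\ref{PROP:complement} to the case $\mathfrak{n}_Q=1$, which holds precisely when $Q\in\{1,2,6,42,1806\}$. First I would recall why $\mathfrak{n}_Q=1$ in exactly these cases: by definition \eqref{EQ:nQ}, $\mathfrak{n}_Q=\lcm\{(p-1)/\gcd(p-1,Q):p\mid Q\}$, and this equals $1$ iff $(p-1)\mid Q$ for every prime $p\mid Q$. For $Q=1$ this is vacuous, and for $Q\in\{2,6,42,1806\}$ one checks it directly (these are exactly the primary pseudoperfect numbers among the five elements of $\mathfrak{S}$, and the condition $(p-1)\mid Q$ is precisely what makes $1\in\mathfrak{N}_Q$, as established after Proposition~\ref{PROP: M}). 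In particular, for these $Q$ we have $\mathfrak{N}_Q\neq\emptyset$, so Proposition~\ref{PROP:complement} applies.

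The substantive step is simply to substitute $\mathfrak{n}_Q=1$ into the description of the complement. With $\mathfrak{n}_Q=1$, the gcd appearing in Proposition~\ref{PROP:complement} becomes
$$D=\gcd\!\left(\mathfrak{n}_Q,\tfrac{p(p-1)}{d}\right)=\gcd\!\left(1,\tfrac{p(p-1)}{d}\right)=1.$$
Feeding $\mathfrak{n}_Q=1$ and $D=1$ into the formula for $W_d(Q)$ collapses the factor $\tfrac{\mathfrak{n}_Q}{D}=1$, so the typical element $Kp\,\tfrac{\mathfrak{n}_Q}{D}\,\tfrac{p-1}{d}$ reduces to $K\,\tfrac{p(p-1)}{d}$, yielding exactly the stated form of $W_d(Q)$. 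Moreover, since $\mathfrak{n}_Q=1$ gives $\mathfrak{n}_Q\mathbb{N}=\mathbb{N}$, the left-hand side $\mathfrak{n}_Q\mathbb{N}\setminus\mathfrak{N}_Q$ of Proposition~\ref{PROP:complement} becomes $\mathbb{N}\setminus\mathfrak{N}_Q$, which is the complement described in the corollary.

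I do not expect any serious obstacle here: the corollary is a pure specialization, and the only thing to verify carefully is the bookkeeping that $\mathfrak{n}_Q=1$ forces both $D=1$ and $\mathfrak{n}_Q\mathbb{N}=\mathbb{N}$ simultaneously. The mildly delicate point—if one wanted to be fully explicit—is confirming that the set $\{1,2,6,42,1806\}$ is exactly the set of $Q$ with $\mathfrak{n}_Q=1$ and $\mathfrak{N}_Q\neq\emptyset$, so that the hypothesis of Proposition~\ref{PROP:complement} is genuinely met for each listed value; but this is already contained in the results of Section~\ref{SEC:SPE} together with the remark preceding the present definition of weak primary pseudoperfect numbers. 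Thus the proof is a one-line invocation: \emph{Apply Proposition~\ref{PROP:complement} with $\mathfrak{n}_Q=1$, which forces $D=1$ and $\mathfrak{n}_Q\mathbb{N}=\mathbb{N}$.}
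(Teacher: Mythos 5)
Your proposal is correct and is exactly the paper's (implicit) argument: the corollary is stated as a direct particularization of Proposition~\ref{PROP:complement}, obtained by observing that $\mathfrak{n}_Q=1$ for $Q\in\{1,2,6,42,1806\}$, whence $D=\gcd\left(1,\tfrac{p(p-1)}{d}\right)=1$ and $\mathfrak{n}_Q\mathbb{N}=\mathbb{N}$. Your extra care in checking that $(p-1)\mid Q$ for each prime $p\mid Q$ and that $\mathfrak{N}_Q\neq\emptyset$ (via $1\in\mathfrak{N}_Q$ from Section~\ref{SEC:SPE}) just makes explicit what the paper leaves to the reader.
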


In the cases when $\mathfrak{n}_Q=5$, i.e., when $Q=47058$ or $2214502422$, we can also give a somewhat simpler version of Proposition 5. Note that, in these cases, $\gcd\left(\mathfrak{n}_Q,\frac{p(p-1)}{d}\right)=\gcd(\mathfrak{n}_Q,p(p-1)=1$ or 5 because $5\nmid Q$.

\begin{cor}
\label{COR:UNI}
Let $Q\in\{ 47058, 2214502422\}$. Then
$$5\mathbb{N}\setminus \mathfrak{N}_{Q}=\bigcup_{d\mid Q} W^{(1)}_d(Q)\cup \bigcup_{d\mid Q} W^{(2)}_d(Q),$$
where the sets $W_d^{(i)}(Q)$ are given by
$$W^{(1)}_d(Q):=\left\{K\frac{p(p-1)}{d}:p\nmid Q,\ 5\mid p(p-1),\ d \mid p-1,\ K\in\mathbb{N}\right\},$$
$$W^{(2)}_d(Q):=\left\{5K\frac{p(p-1)}{d}: p\nmid Q,\ 5\nmid p(p-1),\ d \mid p-1,\ K\in\mathbb{N}\right\}.$$
\end{cor}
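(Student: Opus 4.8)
The plan is to specialize Proposition \ref{PROP:complement} to the two values $Q\in\{47058,2214502422\}$. For both of these we computed earlier that $\mathfrak{n}_Q=5$, and for both $\mathfrak{N}_Q\neq\emptyset$, so Proposition \ref{PROP:complement} applies. Since $\mathfrak{n}_Q=5$ we have $\mathfrak{n}_Q\mathbb{N}=5\mathbb{N}$, so the proposition already delivers $5\mathbb{N}\setminus\mathfrak{N}_Q=\bigcup_{d\mid Q}W_d(Q)$, and the entire task reduces to rewriting each $W_d(Q)$ as $W_d^{(1)}(Q)\cup W_d^{(2)}(Q)$.

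The one arithmetic observation needed is a simplification of $D=\gcd(\mathfrak{n}_Q,\frac{p(p-1)}{d})=\gcd(5,\frac{p(p-1)}{d})$. First I would note that $d\mid Q$ together with $5\nmid Q$ forces $5\nmid d$; hence dividing $p(p-1)$ by $d$ removes no factor of $5$, so $D=\gcd(5,\frac{p(p-1)}{d})=\gcd(5,p(p-1))$. Because $5$ is prime, this value is $5$ exactly when $5\mid p(p-1)$ and $1$ exactly when $5\nmid p(p-1)$.

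Next I would split the index set defining $W_d(Q)$, namely the primes $p$ with $p\nmid Q$ and $d\mid p-1$, according to this dichotomy. When $5\mid p(p-1)$ we have $D=5$, so $\mathfrak{n}_Q/D=1$ and the generator $Kp\frac{\mathfrak{n}_Q}{D}\frac{p-1}{d}$ collapses to $K\frac{p(p-1)}{d}$, which is precisely the form defining $W_d^{(1)}(Q)$. When $5\nmid p(p-1)$ we have $D=1$, so $\mathfrak{n}_Q/D=5$ and the generator becomes $5K\frac{p(p-1)}{d}$, precisely the form defining $W_d^{(2)}(Q)$. This yields $W_d(Q)=W_d^{(1)}(Q)\cup W_d^{(2)}(Q)$ for each $d\mid Q$; taking the union over all $d\mid Q$ and regrouping the two families gives the claimed identity.

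There is essentially no serious obstacle here, as the result is a direct specialization of Proposition \ref{PROP:complement}; the only point requiring care is the gcd simplification, where one must invoke $5\nmid Q$ (hence $5\nmid d$) to pass from $\gcd(5,\frac{p(p-1)}{d})$ to $\gcd(5,p(p-1))$. I would therefore double-check that $5\nmid Q$ in both cases, which is clear from $47058=2\cdot 3\cdot 11\cdot 23\cdot 31$ and from the factorization of $2214502422$, since this hypothesis is exactly what makes the factor-of-$5$ bookkeeping clean.
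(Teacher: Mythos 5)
Your proof is correct and follows exactly the paper's route: the paper states this corollary as a direct specialization of Proposition \ref{PROP:complement}, justified only by the inline remark that $\gcd\left(\mathfrak{n}_Q,\frac{p(p-1)}{d}\right)=\gcd(\mathfrak{n}_Q,p(p-1))=1$ or $5$ because $5\nmid Q$, which is precisely the gcd simplification and case split you carry out. Your write-up just makes explicit the bookkeeping (the $D=5$ versus $D=1$ dichotomy yielding $W_d^{(1)}$ and $W_d^{(2)}$) that the paper leaves to the reader.
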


The remaining value, $Q=8490421583559688410706771261086$, does not admit such a simple decomposition because in this case $\mathfrak{n}_Q$ is not prime. In any case, we are in a position to conclude the paper by giving bounds for the asymptotic density of $\mathfrak{N}_Q$ for the known non-empty cases. But first we introduce a technical lemma.

\begin{lem}
\label{LEM:DENS}
Let $\mathcal{A}:=\{a_k\}_{k\in\mathbb{N}}$ and $\{c_k\}_{k\in\mathbb{N}}$ be two sequences of positive integers, and for $k\in\mathbb{N}$ define the arithmetic progression
$$\mathcal{B}_{k}:=\{a_k+ (s-1)c_k: s \in \mathbb{N} \}.$$ 
If $\sum_{k=1}^{\infty}c_k^{-1}$ is convergent and $\mathcal{A}$ has zero asymptotic density, then $\bigcup_{k=1}^{\infty}\mathcal{B}_{k}$ has an asymptotic density.
\end{lem}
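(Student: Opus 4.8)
The plan is to approximate the full union $\mathcal{B}:=\bigcup_{k=1}^{\infty}\mathcal{B}_k$ by its finite truncations and to control the tail using both hypotheses. Write $\underline{\delta}$ and $\overline{\delta}$ for lower and upper asymptotic density, and for each $K\in\mathbb{N}$ set $\mathcal{B}^{(K)}:=\bigcup_{k=1}^{K}\mathcal{B}_k$ and $T_K:=\bigcup_{k>K}\mathcal{B}_k$, so that $\mathcal{B}=\mathcal{B}^{(K)}\cup T_K$. Since $\mathcal{B}^{(K)}$ is a finite union of arithmetic progressions, its indicator function is eventually periodic and therefore $\mathcal{B}^{(K)}$ has a well-defined asymptotic density $\delta_K:=\delta(\mathcal{B}^{(K)})$; concretely this follows by inclusion-exclusion once one observes that the intersection of finitely many arithmetic progressions is again empty or an arithmetic progression, by the Chinese Remainder Theorem.

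First I would estimate the counting function of a single tail progression. For $k>K$, the progression $\mathcal{B}_k=\{a_k+(s-1)c_k:s\in\mathbb{N}\}$ meets $[1,N]$ in at most $\lfloor (N-a_k)/c_k\rfloor+1\le N/c_k+1$ points when $a_k\le N$, and in none when $a_k>N$. Summing over $k>K$ gives
$$\left|T_K\cap[1,N]\right|\;\le\;\sum_{\substack{k>K\\ a_k\le N}}\left(\frac{N}{c_k}+1\right)\;\le\;N\sum_{k>K}\frac{1}{c_k}+\#\{k>K:a_k\le N\}.$$
Dividing by $N$ and letting $N\to\infty$, the contribution of the second summand vanishes because $\#\{k:a_k\le N\}=|\mathcal{A}\cap[1,N]|=o(N)$ by the zero-density hypothesis on $\mathcal{A}$, so that
$$\overline{\delta}(T_K)\;\le\;\sum_{k>K}\frac{1}{c_k}.$$

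From the inclusions $\mathcal{B}^{(K)}\subseteq\mathcal{B}\subseteq\mathcal{B}^{(K)}\cup T_K$ together with the fact that $\mathcal{B}^{(K)}$ has density $\delta_K$, I would then obtain
$$\delta_K\;\le\;\underline{\delta}(\mathcal{B})\;\le\;\overline{\delta}(\mathcal{B})\;\le\;\delta_K+\overline{\delta}(T_K)\;\le\;\delta_K+\sum_{k>K}\frac{1}{c_k},$$
whence $0\le\overline{\delta}(\mathcal{B})-\underline{\delta}(\mathcal{B})\le\sum_{k>K}c_k^{-1}$ for every $K\in\mathbb{N}$. Letting $K\to\infty$, the convergence of $\sum_{k}c_k^{-1}$ forces the right-hand side to $0$, hence $\underline{\delta}(\mathcal{B})=\overline{\delta}(\mathcal{B})$ and the asymptotic density of $\bigcup_{k}\mathcal{B}_k$ exists.

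I expect the main obstacle to be the tail estimate, and more precisely the treatment of the ``$+1$'' terms. Each individual progression contributes an unavoidable rounding error of $+1$ to its count in $[1,N]$, and a priori these errors could accumulate over the infinitely many $k$ with $a_k\le N$ and produce a positive density even though $\sum_k c_k^{-1}$ is small. The hypothesis that $\mathcal{A}$ has zero density is exactly what is needed to absorb them, since it bounds the number of such indices by $o(N)$; interpreting $\mathcal{A}$ as a set of distinct values, $\#\{k:a_k\le N\}=|\mathcal{A}\cap[1,N]|$, so no extra assumption on the $a_k$ is required beyond the stated one.
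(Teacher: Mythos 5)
Your overall strategy---truncate the union at level $K$, bound the upper density of the tail $T_K$ by $\sum_{k>K}c_k^{-1}$ using the zero density of $\mathcal{A}$, and then sandwich $\underline{\delta}$ and $\overline{\delta}$ of the full union between $\delta_K$ and $\delta_K+\overline{\delta}(T_K)$---is exactly the paper's proof. However, the step you yourself single out as the crux contains a genuine error. You bound $|T_K\cap[1,N]|$ by summing $N/c_k+1$ over the \emph{indices} $k>K$ with $a_k\le N$, and then dispose of the accumulated $+1$'s via the identity $\#\{k>K: a_k\le N\}=|\mathcal{A}\cap[1,N]|$. That identity is false unless $k\mapsto a_k$ is injective: the left-hand side counts indices, the right-hand side counts distinct values, and the lemma is stated for \emph{sequences}, with the zero-density hypothesis imposed only on the set of values $\mathcal{A}$. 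For instance, take $a_k=1$ for all $k$ and $c_k=2^k$; then $\mathcal{A}=\{1\}$ has density zero and $\sum_k c_k^{-1}<\infty$, so the lemma applies (the union is the set of odd numbers, of density $1/2$), but $\#\{k>K: a_k\le N\}$ is infinite for every $N\ge 1$, so your tail bound is vacuous and the claimed equality with $|\mathcal{A}\cap[1,N]|=1$ fails outright.

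The repair is precisely the regrouping the paper uses: do not bound the union by a sum of per-progression counts. Instead split each tail progression into its first term and the rest. Every element of $T_K\cap[1,N]$ either equals some $a_k$ with $k>K$, and hence lies in the \emph{set} $\mathcal{A}\cap[1,N]$, or has the form $a_k+(s-1)c_k$ with $s\ge 2$, and for each fixed $k$ there are at most $N/c_k$ elements of the latter kind. Since $T_K\cap[1,N]$ is a set, all the first terms together contribute at most $\mathrm{card}(\mathcal{A}\cap[1,N])=o(N)$ elements, no matter how often values repeat among the $a_k$. This gives $|T_K\cap[1,N]|\le \mathrm{card}(\mathcal{A}\cap[1,N])+N\sum_{k>K}c_k^{-1}$, hence $\overline{\delta}(T_K)\le\sum_{k>K}c_k^{-1}$, which is the paper's inequality for $\vartheta(n,N)$. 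With this one-line change the rest of your argument (density of finite unions of arithmetic progressions via inclusion-exclusion, the sandwich, and letting $K\to\infty$) goes through verbatim and coincides with the paper's proof.
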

\begin{proof}
Let us denote $B_{n}:=\bigcup_{k=n+1}^{\infty}\mathcal{B}_{k}$ and $\vartheta(n,N):=\textrm{card}([0,N]\cap B_n)$. Then
$$\vartheta(n,N) \leq \textrm{card}([0,N]\cap \mathcal{A})+ N \sum_{k=n+1}^{\infty}\frac{1}{c_{k}}.$$
From this, we get
$$\bar{\delta}(B_{n})=\lim\sup\frac{\vartheta(n,N)}{N}\leq \lim \sup\frac{ \textrm{card}([0,N]\cap \mathcal{A})}{N} + \sum_{k=n+1}^{\infty}\frac{1}{c_{k}}= \sum_{k=n+1}^{\infty}\frac{1}{c_{k}}.$$

Now, for every $n$, the set $\bigcup_{k=1}^{n}\mathcal{B}_{k}$
has an asymptotic density $\delta_{n}$, and we have
\begin{align}
\label{bound}\delta_{n} & \leq \underline{\delta} \left(\bigcup
_{k=1}^{\infty}\mathcal{B}_{k}\right)\leq \overline{\delta} \left(\bigcup_{k=1}^{\infty}
\mathcal{B}_{k}\right)=\overline{\delta}\left(\bigcup_{k=1}^{n}\mathcal{B}_{k}\cup B_{n}\right)
 \leq \delta_{n}+\bar{\delta}(B_{n}) \\ &\leq
\delta_{n}+\sum_{k=n+1}^{\infty}\frac{1}{c_{k}}.\nonumber
\end{align}
Now, the sequence $\delta_{n}$ is non-decreasing, bounded (by 1) and, hence, convergent.
Moreover, since $\sum_{k=1}^{\infty}c_k^{-1}$ is convergent we have that $\sum_{k=n+1}^{\infty}c_k^{-1}$ converges to zero as $n\to\infty$. Taking these facts into account, 
if we take limits in (\ref{bound}) we obtain that
$$\underline{\delta} \left(\bigcup_{k=1}^{\infty}\mathcal{B}_{k}\right)=\lim_{n\to\infty}\delta_n= \overline{\delta} \left(\bigcup_{k=1}^{\infty}\mathcal{B}_{k}\right)$$
and hence $\bigcup_{k=1}^{\infty}\mathcal{B}_{k}$ has an asymptotic density, as claimed.
\end{proof}

\begin{rem} \label{Remark union}
Note that Lemma \ref{LEM:DENS} implies that if a set $S$ is the union of arithmetic progressions of the form $\{n\,c_k : n\in\mathbb{N}\}$ such that the series $\sum_{k=1}^{\infty}c_k^{-1}$ converges, then $S$ has an asymptotic density.
\end{rem}

\begin{prop}
\label{PROP:BOUND}
For every weak primary pseudoperfect number $Q$, the set $\mathfrak{N}_Q $ has an asymptotic density $\delta(\mathfrak{N}_Q)$. Moreover, $ \delta(\mathfrak{N}_Q )$ is strictly smaller than $1/\mathfrak{n}_Q$.
\end{prop}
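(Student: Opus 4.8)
The plan is to establish both claims — existence of the density and the strict upper bound $1/\mathfrak{n}_Q$ — by analyzing the complement $\mathfrak{n}_Q\mathbb{N}\setminus\mathfrak{N}_Q$ via Proposition~\ref{PROP:complement}, rather than $\mathfrak{N}_Q$ directly. Since every element of $\mathfrak{N}_Q$ is a multiple of $\mathfrak{n}_Q$, the set $\mathfrak{N}_Q$ sits inside $\mathfrak{n}_Q\mathbb{N}$, which has density $1/\mathfrak{n}_Q$. So I would compute $\delta(\mathfrak{N}_Q) = \frac{1}{\mathfrak{n}_Q} - \delta\!\left(\mathfrak{n}_Q\mathbb{N}\setminus\mathfrak{N}_Q\right)$, provided the right-hand density exists. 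The whole argument therefore reduces to two things: showing the complement has a density, and showing that density is \emph{strictly positive}.

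For existence, I would invoke Lemma~\ref{LEM:DENS} (or its clean reformulation in Remark~\ref{Remark union}). By Proposition~\ref{PROP:complement}, the complement is a countable union of sets $W_d(Q)$, each of which is itself a union, over primes $p\nmid Q$ with $d\mid p-1$, of the arithmetic progression $\left\{K\,p\,\frac{\mathfrak{n}_Q}{D}\,\frac{p-1}{d}:K\in\mathbb{N}\right\}$ of common difference $c_p := p\,\frac{\mathfrak{n}_Q}{D}\,\frac{p-1}{d}$. Reindexing all these progressions (over the finitely many $d\mid Q$ and the primes $p$) into a single sequence $\{c_k\}$, I must verify that $\sum_k c_k^{-1}$ converges. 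The key estimate is that $c_p \geq \frac{p(p-1)}{d}\cdot\frac{\mathfrak{n}_Q}{D} \gg p(p-1)$ since $D\mid\mathfrak{n}_Q$ and $d\mid Q$ are bounded, so $\sum_p c_p^{-1} \ll \sum_p \frac{1}{p(p-1)} < \infty$; summing over the finitely many divisors $d$ preserves convergence. Remark~\ref{Remark union} then immediately yields that the complement has a density, and hence so does $\mathfrak{N}_Q$.

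For the strict inequality, it suffices to exhibit even a single arithmetic progression inside the complement, since any nonempty progression has positive density and the complement is larger. Concretely, I would pick one prime $p\nmid Q$ (there are infinitely many) with, say, $d=\gcd(Q,p-1)=1$; then $W_1(Q)$ contains the full progression $\left\{K\,p\,\frac{\mathfrak{n}_Q}{D}(p-1):K\in\mathbb{N}\right\}$, which has positive density $\frac{D}{p(p-1)\mathfrak{n}_Q}>0$. Thus $\delta\!\left(\mathfrak{n}_Q\mathbb{N}\setminus\mathfrak{N}_Q\right)>0$, forcing $\delta(\mathfrak{N}_Q) < 1/\mathfrak{n}_Q$ strictly. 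I would phrase this lower bound on the complement using inclusion of a progression and the fact that subadditivity is not needed — a single progression already does the job.

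The main obstacle, I expect, is purely bookkeeping in the convergence step: making sure the reindexing of the double union (over $d\mid Q$ and over primes $p$ with $d\mid p-1$) into a single sequence $\{c_k\}$ is legitimate and that the denominators $c_p$ are correctly bounded below by a constant times $p(p-1)$ uniformly in $p$, so that comparison with $\sum 1/p^2$ applies. One should also confirm that $\mathcal{A}$ in Lemma~\ref{LEM:DENS} can be taken empty (zero density) here, since all our progressions are of the homogeneous form $\{Kc_k\}$; this is exactly the situation covered by Remark~\ref{Remark union}, so the cleanest route is to cite the remark and avoid re-deriving the density-existence argument. No genuinely hard analysis is involved once the $\sum 1/p^2$ comparison is in place.
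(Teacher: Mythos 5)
Your overall route is the same as the paper's: decompose $\mathfrak{n}_Q\mathbb{N}\setminus\mathfrak{N}_Q$ via Proposition~\ref{PROP:complement}, verify that the reciprocals of the common differences form a convergent series by comparison with $\sum_p \frac{1}{p(p-1)}$ (your bound, using $D\mid\mathfrak{n}_Q$ and $d\mid Q$, is exactly the estimate needed), and invoke Lemma~\ref{LEM:DENS} through Remark~\ref{Remark union}; that part is correct. Your handling of the strict inequality is actually \emph{more} explicit than the paper's, which only remarks that $\mathfrak{N}_Q\subset\mathfrak{n}_Q\mathbb{N}$; exhibiting a full arithmetic progression inside the complement is the right way to justify strictness.

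There is, however, a slip in how you exhibit that progression: you ask for a prime $p\nmid Q$ with $\gcd(Q,p-1)=1$. For every even $Q$ --- in particular for every known weak primary pseudoperfect number $Q>1$ --- no such prime exists: $2\mid Q$ forces $p$ to be odd, so $2\mid\gcd(Q,p-1)$. Fortunately the condition is not needed. In the definition of $W_d(Q)$ the parameter $d$ is merely a divisor of $Q$ that divides $p-1$, not the gcd, so $d=1$ is admissible for \emph{every} prime $p\nmid Q$, and $W_1(Q)$ already contains the full progression $\{K\,p\,\mathfrak{n}_Q(p-1)/D : K\in\mathbb{N}\}$, which has positive density; with that repair your argument goes through. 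A second, minor omission: Proposition~\ref{PROP:complement} presupposes $\mathfrak{N}_Q\neq\emptyset$, so your proof silently excludes the case $\mathfrak{N}_Q=\emptyset$, which does occur (e.g.\ $Q=52495396602$). There the claim is trivial, since $\delta(\emptyset)=0<1/\mathfrak{n}_Q$ and $\mathfrak{n}_Q$ is still defined by \eqref{EQ:nQ}; the paper disposes of this case in one line, and your write-up should do the same.
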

\begin{proof} 
If $\mathfrak{N}_Q=\emptyset$, the result is clear from definition \eqref{EQ:nQ}.

On the other hand, if $\mathfrak{N}_Q \neq \emptyset$, then  Proposition \ref{PROP:complement} shows that $\mathfrak{n}_Q \mathbb{N} \setminus \mathfrak{N}_Q $ 
is the union of arithmetic progressions with difference $$\mathfrak{D}(Q,d,p):=\frac{\mathfrak{n}_Q}{\gcd(\mathfrak{n}_Q,p(p-1)/d)}\frac{p(p-1)}{d},$$ where $p$ is a prime not dividing $Q$ and $d$ is a divisor of $Q$ such that $d\mid p-1$.

Taking into account that the series $\sum_{p} \frac{1}{p(p-1)}$ is convergent, it is clear that
$$\sum_{\substack{p\textrm{ prime }\\ d \mid Q}} \frac{1}{\mathfrak{D}(Q,d,p)}<\infty.$$
Consequently, by Remark \ref{Remark union}, it follows from Lemma \ref{LEM:DENS} (with $\mathcal{A} = \{0\}$) that $\mathfrak{n}_Q \mathbb{N} \setminus \mathfrak{N}_Q $ has an asymptotic density and, hence, so does $\mathfrak{N}_Q $. Moreover, since $\mathfrak{N}_Q \subset \mathfrak{n}_Q \mathbb{N} $, it follows that $ \delta(\mathfrak{N}_Q ) < 1/\mathfrak{n}_Q$ as claimed.
\end{proof}

Proposition \ref{PROP:BOUND} not only shows that, for every weak primary pseudoperfect number $Q$, the set $\mathfrak{N}_Q $ has an asymptotic density, but also gives an upper bound for $ \delta(\mathfrak{N}_Q)$. The following result shows that $ \delta(\mathfrak{N}_Q)> 0$ if $\mathfrak{N}_Q\neq\emptyset$.

\begin{teor}
\label{TEOR:POS} 
For every weak primary pseudoperfect number $Q$ such that $\mathfrak{N}_Q$ is non-empty, the asymptotic density of $\mathfrak{N}_Q$ is strictly positive.
\end{teor}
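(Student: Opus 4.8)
```latex
\textbf{Proof proposal.}

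The plan is to establish a strictly positive lower bound for $\delta(\mathfrak{N}_Q)$ by an inclusion--exclusion (or truncation) argument applied to the complement description in Proposition~\ref{PROP:complement}. Since $\mathfrak{N}_Q \subseteq \mathfrak{n}_Q\mathbb{N}$, it suffices to show that within the progression $\mathfrak{n}_Q\mathbb{N}$ the excluded set $\mathfrak{n}_Q\mathbb{N}\setminus\mathfrak{N}_Q = \bigcup_{d\mid Q} W_d(Q)$ does not have full relative density. The key is that each $W_d(Q)$ is a union of arithmetic progressions indexed by primes $p\nmid Q$ with $d\mid p-1$, and each such progression has difference $\mathfrak{D}(Q,d,p)$ comparable to $p(p-1)$; hence the total density contributed is controlled by $\sum_p 1/\mathfrak{D}(Q,d,p)$, which converges by comparison with $\sum_p 1/(p(p-1))$.

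First I would fix an integer $N_0$ and split the complement as the union over the finitely many primes $p\le N_0$ (with $p\nmid Q$ and $d\mid p-1$) together with the tail over primes $p>N_0$. By Proposition~\ref{PROP:BOUND} and Lemma~\ref{LEM:DENS}, the whole complement has an asymptotic density; the tail contributes at most $\sum_{p>N_0,\, d\mid Q} 1/\mathfrak{D}(Q,d,p)$ to the upper density, which tends to $0$ as $N_0\to\infty$. So it remains to bound away from $1/\mathfrak{n}_Q$ the relative density of the finite union over small primes. For this finite union one can compute the density exactly by inclusion--exclusion on the (finitely many) moduli $\mathfrak{D}(Q,d,p)$, but the cleaner route is simply to exhibit a single residue class modulo $\mathfrak{n}_Q\cdot\prod_{p\le N_0} p$ that lies entirely in $\mathfrak{N}_Q$, or equivalently to show the excluded finite union misses a positive-density set. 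I would choose $N_0$ large enough that the tail bound is below, say, $\tfrac{1}{2\mathfrak{n}_Q}$, and then argue that the finite part cannot swallow the remaining density: since each offending progression requires $p\mid n$ for some prime $p\nmid Q$ with $p-1\mid Qn$, one can produce infinitely many $n\in\mathfrak{n}_Q\mathbb{N}$ avoiding all these finitely many prime factors in the relevant way (e.g. restricting to $n$ lying in a suitable residue class coprime to $\prod_{p\le N_0}p$), yielding a positive-density subset of $\mathfrak{N}_Q$.

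Concretely, the cleanest argument I would write is the following. Because the series $\sum_{p}\sum_{d\mid Q} 1/\mathfrak{D}(Q,d,p)$ converges, choose $N_0$ so that the tail sum over $p>N_0$ is strictly less than $1/\mathfrak{n}_Q$. The contribution to $\mathfrak{n}_Q\mathbb{N}\setminus\mathfrak{N}_Q$ from primes $p>N_0$ then has relative upper density (within $\mathfrak{n}_Q\mathbb{N}$) strictly less than $1$. For the finitely many primes $p\le N_0$, I would restrict attention to those $n\in\mathfrak{n}_Q\mathbb{N}$ with $\gcd(n, P)=\gcd(\mathfrak{n}_Q,P)$ where $P=\prod_{p\le N_0,\, p\nmid Q} p$; by Theorem~\ref{THEO}~ii) such $n$ escape every $W_d(Q)$ coming from a small prime, because those sets force a small prime $p$ to divide $n$. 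This congruence condition on $n$ selects a positive-density subprogression of $\mathfrak{n}_Q\mathbb{N}$, and intersecting it with the complement of the small-prime part loses nothing while the large-prime part removes at most a proportion $<1$; combining the two density estimates gives $\delta(\mathfrak{N}_Q)>0$.

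The main obstacle I anticipate is the interaction between the two restrictions: the progressions $W_d(Q)$ for small primes are removed by a coprimality (congruence) condition, while those for large primes are removed by a convergent-tail density estimate, and these must be reconciled on the \emph{same} set of $n$. One must verify that imposing the coprimality condition $\gcd(n,P)=\gcd(\mathfrak{n}_Q,P)$ does not accidentally force $n$ into some large-prime progression, and that the positive density of the coprimality class is not entirely consumed by the tail. Since the coprimality class has a fixed positive relative density $\prod_{p\le N_0,\,p\nmid Q}(1-1/p)$ independent of the tail estimate, and the tail removes relative density strictly below $1/\mathfrak{n}_Q$, a careful bookkeeping of the relevant moduli (using that $\gcd$ with the product of large primes is typically $1$) should close the gap and leave a set of strictly positive density inside $\mathfrak{N}_Q$.
```
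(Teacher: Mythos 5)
Your overall strategy is the same as the paper's (restrict to a subprogression of $\mathfrak{n}_Q\mathbb{N}$ with controlled small-prime content, then beat the large-prime contribution using convergence of $\sum_p 1/(p(p-1))$), but as written it has two genuine gaps, the first of which is fatal precisely in the cases $\mathfrak{n}_Q>1$. The claim that every $n$ with $\gcd(n,P)=\gcd(\mathfrak{n}_Q,P)$ ``escapes every $W_d(Q)$ coming from a small prime'' is false: your condition constrains only \emph{which} primes $p\nmid Q$ divide $n$, and says nothing about the powers of primes dividing $Q$ carried by $n$, yet condition ii) of Theorem \ref{THEO} can fail through exactly those. Concretely, take $Q=47058=2\times3\times11\times23\times31$, so $\mathfrak{n}_Q=5$, and $n=10$. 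Then $\gcd(n,P)=5=\gcd(\mathfrak{n}_Q,P)$, so $n$ lies in your class; but $5\mid n$, $5\nmid Q$, and $5-1=4$ divides $Qn=470580$, so $n\notin\mathfrak{N}_Q$ by Theorem \ref{THEO} ii). In the notation of Proposition \ref{PROP:complement}, the $p=5$ component of $W_2(Q)$ is precisely $10\mathbb{N}$ (here $D=5$), a \emph{small-prime} bad set of relative density $1/2$ inside $5\mathbb{N}$ that your congruence class does not avoid at all, since divisibility by $2\mid Q$ is unconstrained. The paper closes exactly this hole by imposing the much stronger condition that $m=n/\mathfrak{n}_Q$ have \emph{no} prime factor below the cutoff whatsoever, including primes dividing $Q$ or $\mathfrak{n}_Q$; then $\gcd(p-1,m)=1$ for every small prime $p\mid n$, so $p-1\mid Q\mathfrak{n}_Q m$ would force $p-1\mid Q\mathfrak{n}_Q$, which is impossible for $p\mid\mathfrak{n}_Q$, $p\nmid Q$, because $\mathfrak{n}_Q\in\mathfrak{N}_Q$.

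The second gap is quantitative. The class you select has relative density roughly $\prod_{p\le N_0,\ p\nmid Q\mathfrak{n}_Q}\left(1-\frac{1}{p}\right)\asymp 1/\log N_0$, which tends to $0$ as $N_0\to\infty$; it is not ``fixed \dots independent of the tail estimate,'' because the class and the tail are governed by the same cutoff $N_0$. Hence choosing the tail sum $<1/(2\mathfrak{n}_Q)$ proves nothing: for that fixed $N_0$ the tail could still exceed the density of your class and swallow it entirely. What actually rescues the argument --- and is the analytic heart of the paper's proof --- is a \emph{rate} comparison: the tail is $O_Q(1/N_0)$ (from $\sum_{p>N_0}1/(p(p-1))=O(1/N_0)$ together with a divisor-counting factor such as $\tau(Q\mathfrak{n}_Q)$), while the class density is $\gg 1/\log N_0$; the paper makes this explicit via Rosser--Schoenfeld, $\rho(y)>0.5/\log y$ for $y\ge 285$, and then chooses $y$ with $0.5/\log y>2\tau(Q\mathfrak{n}_Q)/y$. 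You gesture at this in your final paragraph, but you never state the comparison, and the threshold you do state is the wrong one. With the stronger roughness condition from the first point and this explicit $1/\log N_0$ versus $1/N_0$ comparison, your outline becomes the paper's proof.
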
 
\begin{proof}
Fix a weak primary pseudoperfect number $Q$ such that $\mathfrak{N}_Q\neq\emptyset$. By Proposition \ref{PROP:NQ}, if we consider
$$\mathfrak{n}_Q=\min\mathfrak{N}_Q,$$
then it follows that $\mathfrak{n}_Q$ divides every element in $\mathfrak{N}_Q$.

Let $y$ be a positive integer and consider the set
$$\mathcal{M}_y:=\{ \mathfrak{n}_Qm : \textrm{ prime } p\mid m \implies p<y\}.$$
We want to study the asymptotic behavior of $[0,N]\cap\mathcal{M}_y\cap\mathfrak{N}_Q$ when $N\to \infty$.

First, we observe that
\begin{equation}\label{p1}\textrm{card}([0,N]\cap \mathcal{M}_y) = \frac{N}{Q\mathfrak{n}_Q}(\rho(y)+o(1)) \text{ as }N\to\infty,\end{equation}
where $\rho(y):=\prod_{p\leq y}(1-1/p)$. Moreover, \cite[Formula 3.25]{ROS} states that if $y\geq 285$, then
$$\rho(y)>\frac{e^{-\gamma}}{\log y}\left(1-\frac{1}{2\log^2 y}\right).$$
Hence, if we choose $y>285$ and taking into account that $e^{-\gamma}>0.56$ we obtain that 
\begin{equation}\label{p2}\rho(y)>\frac{0.5}{\log y}.\end{equation}

Now, assume that $y\geq Q$ and let $N\geq\mathfrak{n}_Qm\in\mathcal{M}_y$ is such that $\mathfrak{n}_Qm\not\in\mathfrak{N}_Q$. If this is the case, condition ii) in Theorem \ref{THEO} must fail (because condition i) trivially holds due to the form of $\mathfrak{n}_Qm$). This means that, if prime $p\mid m$, then $p-1\mid Q\mathfrak{n}_Qm$ (because the primes dividing $m$ are greater than $y>Q$). If we put $p-1=dt$ with $d\mid Q\mathfrak{n}_Q$ and $t\mid m$, then $m$ is divisible both by $t$ and $dt+1$, and therefore by their product (because they are coprime) and $t>y/d$. For fixed $d$ and $t$, the number of $m\leq N/Q\mathfrak{n}_Q$ divisible by $t(dt+1)$ is
$$\left\lfloor\frac{N}{Q\mathfrak{n}_Qt(dt+1)}\right\rfloor\leq \frac{N}{dQ\mathfrak{n}_Q t^2}$$
and we want to sum all of them over $d\mid Q\mathfrak{n}_Q$ and $t>y/d$.

If we keep $d$ fixed and sum over all $t>y/d$ and we further assume that $y\geq 2Q\mathfrak{n}_Q$ (and hence, $y/d\geq 2$) we get that
$$\sum_{t>y/d} \frac{x}{dQ\mathfrak{n}_Qt^2}< \frac{2N}{yQ\mathfrak{n}_Q}.$$

Consequently, if $\tau$ denotes the number-of-divisors function, then
\begin{equation}\label{p3}\sum_{\substack{d\mid Q\mathfrak{n}_Q\\ t>y/d}} \frac{N}{dQ\mathfrak{n}_Qt^2}\leq \frac{2\tau(Q\mathfrak{n}_Q)N}{Q\mathfrak{n}_Qy}.\end{equation}

Now, if we take $y>\max\{285,2Q\mathfrak{n}_Q\}$, putting together (\ref{p1}), (\ref{p2}) and (\ref{p3}) we obtain that, if $N\to\infty$,
\begin{equation}\label{fin}\textrm{card}([0,N]\cap \mathcal{M}_y\cap\mathfrak{N}_Q)\geq \frac{1}{Q\mathfrak{n}_Q}\left(\frac{0.5}{\log y}-\frac{2\tau(Q\mathfrak{n}_Q)}{y}\right)N(1+o(1)).\end{equation}

Finally, since $\log y$ grows more slowly than $y$, we can choose $y>\max\{285,2Q\mathfrak{n}_Q\}$ such that the main term in (\ref{fin}) is positive. This means that the asymptotic density of $\mathfrak{N}_Q$ (which exists due to Proposition \ref{PROP:BOUND}) is positive, as claimed.
\end{proof}

Observe that Proposition \ref{PROP:BOUND} and Theorem \ref{TEOR:POS} give upper and lower bounds for the asymptotic density of $\mathfrak{N}_Q$ when $\mathfrak{N}_Q$ is non-empty. The remaining results are devoted to giving tighter bounds.

\begin{prop}
\label{PROP:DENS}
We have the inequalities
$$0.0560465<\delta(\mathfrak{N}_{47058})<0.0800567,$$
$$0.0070565<\delta(\mathfrak{N}_{2214502422})<0.0800567.$$
\end{prop}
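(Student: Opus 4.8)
The plan is to reduce the computation to the complement and then estimate its density by inclusion--exclusion on a finite family of arithmetic progressions plus a tail bound. Since $\mathfrak{n}_Q=5$ in both cases, Proposition~\ref{PROP:NQ} gives $\mathfrak{N}_Q\subseteq 5\mathbb{N}$, and because $\mathfrak{N}_Q$ and its complement in $5\mathbb{N}$ both have densities by Proposition~\ref{PROP:BOUND}, I would write
$$\delta(\mathfrak{N}_Q)=\frac{1}{5}-\delta(C_Q),\qquad C_Q:=5\mathbb{N}\setminus\mathfrak{N}_Q.$$
By Corollary~\ref{COR:UNI}, $C_Q$ is an explicit countable union of arithmetic progressions, each of the form $m\mathbb{N}$ with difference $m=\mathfrak{D}(Q,d,p)$ indexed by a prime $p\nmid Q$ and a divisor $d\mid\gcd(Q,p-1)$ with $d\mid p-1$. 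Since each progression consists of all multiples of its difference, the density of any finite subfamily $F$ is given exactly by
$$\delta\Bigl(\bigcup_{k\in F} m_k\mathbb{N}\Bigr)=\sum_{\emptyset\ne S\subseteq F}(-1)^{|S|+1}\frac{1}{\lcm\{m_k : k\in S\}}.$$

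For the upper bound on $\delta(\mathfrak{N}_Q)$ I would produce a lower bound on $\delta(C_Q)$. Choosing a finite family $F$ built from the progressions of smallest difference (those arising from the smallest primes $p\nmid Q$ together with the admissible divisors $d$), monotonicity of density gives $\delta(C_Q)\ge\delta(\bigcup_{k\in F}m_k\mathbb{N})$, and the right-hand side is computed exactly from the inclusion--exclusion formula above. Because the prime factorisations of $47058$ and $2214502422$ agree except for the large prime $47059$, the same small-difference family $F$ applies to both, which is precisely why the two cases share the common upper bound $0.0800567$.

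For the lower bound on $\delta(\mathfrak{N}_Q)$ I would produce an upper bound on $\delta(C_Q)$ by splitting $C_Q$ into the finite part $\bigcup_{k\in F}m_k\mathbb{N}$ and its tail and using subadditivity of the upper density,
$$\delta(C_Q)\le\delta\Bigl(\bigcup_{k\in F}m_k\mathbb{N}\Bigr)+\sum_{k\notin F}\frac{1}{m_k},\qquad \sum_{k\notin F}\frac{1}{m_k}\le\sum_{p>P}\ \sum_{d\mid\gcd(Q,p-1)}\frac{d}{p(p-1)}.$$
The tail converges because $\sum_p 1/\bigl(p(p-1)\bigr)<\infty$, which is exactly the convergence already invoked in the proof of Proposition~\ref{PROP:BOUND} via Lemma~\ref{LEM:DENS}; an explicit estimate of this tail, combined with the exact finite density, yields the two distinct lower bounds $0.0560465$ and $0.0070565$. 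The gap between the cases is explained by the additional divisors of $2214502422$, which (through the condition $d\mid p-1$ with larger $d$) create denser progressions in its complement and hence a larger $\delta(C_Q)$.

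The main obstacle is computational rather than conceptual. To reach seven-digit precision the finite family $F$ must be large enough that the inclusion--exclusion sum, ranging over all nonempty subsets of $F$ with rapidly growing least common multiples, is genuinely laborious to evaluate, while the tail $\sum_{k\notin F}1/m_k$ must simultaneously be bounded sharply enough that the upper and lower estimates bracket the true value to the stated accuracy. The difficulty therefore lies in balancing the size of $F$ against the sharpness of the tail bound and in organising the inclusion--exclusion so that it terminates; the analytic scaffolding --- existence of $\delta(\mathfrak{N}_Q)$ and convergence of the reciprocal-difference series --- is already supplied by Proposition~\ref{PROP:BOUND} and Lemma~\ref{LEM:DENS}.
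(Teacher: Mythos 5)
Your proposal is correct and follows essentially the same route as the paper: the authors also write $\delta(\mathfrak{N}_Q)$ via the complement $5\mathbb{N}\setminus\mathfrak{N}_Q$ from Corollary~\ref{COR:UNI}, compute the exact density of the subunion coming from the first $50$ primes by inclusion--exclusion (in Mathematica, after first reducing to a set of $20$ primitive differences), and bound the tail by $\delta(\mathcal{U}_{d,p_i}(Q))\leq d/(p_i(p_i-1))$ using the convergence of $\sum_p 1/(p(p-1))$, exactly your sandwich $\delta(F)\le\delta(C_Q)\le\delta(F)+\text{tail}$. Your explanation of why the upper bounds coincide (the small divisors of the two $Q$'s agree, the extra factor $47059$ being too large to matter for small primes) and why the lower bounds differ (the extra divisors inflate the tail estimate for $2214502422$) matches the mechanics of the paper's computation.
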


\begin{proof}

For $Q\in \{47058, 2214502422\}$ Corollary \ref{COR:UNI} implies that
$$5 \mathbb{N}\setminus \mathfrak{N}_{Q}=\bigcup_{\substack{d\mid Q\\ p\textrm{ prime}}} \mathcal{U}_{d,p}(Q),$$
where
$$\mathcal{U}_{d,p}(Q):=\begin{cases}

    \left\{5K\frac{p(p-1)}{d}:\ K\in\mathbb{N}\right\}, & \textrm{ if }p\nmid Q,\ 5\nmid p(p-1),\ d \mid p-1;\\

   \left\{K\frac{p(p-1)}{d}: K \in\mathbb{N}\right\}, & \textrm{ if } p\nmid Q,\ 5\mid p(p-1),\ d \mid p-1;\ \\

    \emptyset, & \textrm{ otherwise.}\end{cases}$$

If $p_i$ denotes the $i$th prime number, we have that
 \begin{equation}\label{eq1}\delta(\bigcup_{\substack{i \leq 50   \\d\mid Q }}\mathcal{U}_{d,p_i}(Q))< \delta(5 \mathbb{N}\setminus \mathfrak{N}_{Q})\leq \delta(\bigcup_{\substack{i \leq 50   \\d\mid Q }}\mathcal{U}_{d,p_i}(Q))+\sum_{\substack{i > 50   \\d\mid Q}} \delta(\mathcal{U}_{d,p_i}(Q)).\end{equation}

Computing the primitive elements of the union of $\mathcal{U}_{d,p_i}(Q)$ for  $i\leq50$ we obtain the set
%$$\begin{array}{ll} \mathcal{S}_{50}:=\{ 10,35,235,285,335,695,2985,3775,5135,8515,8555, 8755,17015,18145, \\ \hspace{16ex} 22005,28355, 41255,69305,79655,128255\}\end{array}$$
\begin{align*} \mathcal{S}_{50}:=\{ &10,35,235,285,335,695,2985,3775,5135,8515,8555, 8755,17015,18145,\\
 &22005,28355, 41255,69305,79655,128255\}
 \end{align*}
and consequently
    $$ \bigcup_{\substack{i \leq 50   \\d\mid Q }}\mathcal{U}_{d,p_i}(Q)= \bigcup_{\substack{t \in \mathcal{S}_{50}}} \{K t : K \in \mathbb{N} \}.$$

Now, using Mathematica, the inclusion-exclusion principle leads to
%$$0.119781<\delta(\bigcup_{\substack{i \leq 50   \\d\mid Q }}\mathcal{U}_{d,p_i}(Q))=\frac{22232316750436016599664077073}{185607973526408781310806785365}$$
$$\delta(\bigcup_{\substack{i \leq 50   \\d\mid Q }}\mathcal{U}_{d,p_i}(Q))=\frac{48357225625417447595522734010896225250266313}{403167008827681283131141033075588326251331565}.$$

Using now $\sum_{\substack{p}} \frac{1}{p(p-1)}=0.7731566690497\dotso$ and $\delta(\mathcal{U}_{d,p_i}(Q))\leq \frac{d}{p_i (p_i-1)}$, and doing some computations, we can also obtain an upper bound for $\displaystyle{\sum_{\substack{i > 50   \\d\mid Q}} \delta(\mathcal{U}_{d,p_i}(Q))}$ which, together with (\ref{eq1}), leads to the desired bounds.
\end{proof}

\begin{prop}
We also have the inequalities
$$0.583874 < \delta(\mathfrak{N}_2) < 0.584604,$$
$$0.70405 < \delta(\mathfrak{N}_6) < 0.707659,$$
$$0.78215 < \delta(\mathfrak{N}_{42}) < 0.79399,$$
$$0.7747 < \delta(\mathfrak{N}_{1806}) < 0.812570.$$
\end{prop}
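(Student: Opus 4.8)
The plan is to handle the four cases $Q\in\{2,6,42,1806\}$ uniformly, exactly as in the proof of Proposition~\ref{PROP:DENS}, exploiting that here $\mathfrak{n}_Q=1$. By Corollary~\ref{COR:cor1} the complement $\mathbb{N}\setminus\mathfrak{N}_Q$ is the union of the arithmetic progressions
$$W_d(Q)=\left\{K\frac{p(p-1)}{d}:p\nmid Q,\ d\mid p-1,\ K\in\mathbb{N}\right\},\qquad d\mid Q,$$
each consisting of the positive multiples of $c_{d,p}:=p(p-1)/d$. Since Proposition~\ref{PROP:BOUND} already guarantees that $\delta(\mathfrak{N}_Q)$ exists and that $\mathfrak{N}_Q\subseteq\mathfrak{n}_Q\mathbb{N}=\mathbb{N}$, it suffices to sandwich $\delta(\mathbb{N}\setminus\mathfrak{N}_Q)$ between explicit rationals and take complements, using $\delta(\mathfrak{N}_Q)=1-\delta(\mathbb{N}\setminus\mathfrak{N}_Q)$.

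First I would fix a prime cutoff $p_J$ (for instance the $50$th prime, as before) and compute exactly the density of the finite sub-union over $p\le p_J$. Collecting the distinct generators $c_{d,p}=p(p-1)/d$ with $p\le p_J$, $p\nmid Q$, $d\mid Q$, $d\mid p-1$ into a finite set $\mathcal{S}$, the finite union equals $\bigcup_{t\in\mathcal{S}}\{Kt:K\in\mathbb{N}\}$, whose density is given by the inclusion--exclusion formula
$$\delta\!\left(\bigcup_{t\in\mathcal{S}}\{Kt\}\right)=\sum_{\emptyset\neq T\subseteq\mathcal{S}}\frac{(-1)^{|T|+1}}{\lcm(T)},$$
a finite sum of rationals that Mathematica evaluates exactly. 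This provides the lower bound for $\delta(\mathbb{N}\setminus\mathfrak{N}_Q)$ in the inequality analogous to~(\ref{eq1}).

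Next I would bound the tail. The progression of multiples of $c_{d,p}$ has density $d/(p(p-1))$, so the combined contribution of the omitted primes is at most
$$\sum_{d\mid Q}\ \sum_{\substack{p>p_J\\ d\mid p-1,\ p\nmid Q}}\frac{d}{p(p-1)}\ \le\ \sum_{d\mid Q} d\left(\sum_{p}\frac{1}{p(p-1)}-\sum_{p\le p_J}\frac{1}{p(p-1)}\right).$$
Using the known value $\sum_p \frac{1}{p(p-1)}=0.7731566690497\dotso$ together with the exact partial sum over $p\le p_J$, this tail is bounded by an explicit rational. Adding it to the exact finite density yields the upper bound for $\delta(\mathbb{N}\setminus\mathfrak{N}_Q)$, and complementation produces the stated numerical intervals for $\delta(\mathfrak{N}_Q)$.

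The main obstacle is quantitative rather than conceptual: the tail estimate is controlled by $\sum_{d\mid Q}d=\sigma(Q)$, which for $Q=1806=2\cdot 3\cdot 7\cdot 43$ equals $3\cdot 4\cdot 8\cdot 44=4224$, far larger than for $Q=2$. This is precisely why the gap between the bounds widens with $Q$ and why $Q=1806$ yields the loosest interval $(0.7747,0.8126)$; attaining the claimed precision there forces one to push $p_J$ high enough that the tail becomes small while keeping the inclusion--exclusion over $\mathcal{S}$ feasible, since both the number of subsets $T$ and the magnitudes of $\lcm(T)$ grow rapidly. One should also note that the crude per-progression bound $d/(p(p-1))$, which ignores the restriction $d\mid p-1$, is wasteful but still keeps all four intervals safely below the value $1/\mathfrak{n}_Q=1$ from Proposition~\ref{PROP:BOUND}, so no sharper tail estimate is needed.
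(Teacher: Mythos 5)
Your overall strategy coincides with the paper's: for $Q\in\{2,6,42,1806\}$ the paper's proof consists precisely of redoing the computations of Proposition~\ref{PROP:DENS} for the decomposition of Corollary~\ref{COR:cor1}, i.e., an exact inclusion--exclusion evaluation of the density of a finite sub-union of the progressions $W_d(Q)$, a tail bound for the omitted primes, and complementation. The structural parts of your write-up (existence of the density, the sandwich inequality, the exact finite computation, $\delta(\mathfrak{N}_Q)=1-\delta(\mathbb{N}\setminus\mathfrak{N}_Q)$) are all fine.

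The genuine gap is your tail estimate. Observe first that the gap between each claimed lower and upper bound is exactly the tail bound one uses, so the tail must be pushed below $0.00073$ for $Q=2$, $0.0036$ for $Q=6$, $0.0118$ for $Q=42$, and $0.0379$ for $Q=1806$. By discarding the restriction $d\mid p-1$ you bound the tail by
$$\sigma(Q)\left(\sum_{p}\frac{1}{p(p-1)}-\sum_{p\le p_J}\frac{1}{p(p-1)}\right),$$
and with the $50$th prime as cutoff the parenthesis is about $6.7\times 10^{-4}$, so your bound gives roughly $0.002$, $0.008$, $0.064$, and $2.8$ for the four cases (using $\sigma(2)=3$, $\sigma(6)=12$, $\sigma(42)=96$, $\sigma(1806)=4224$): it already fails for $Q=2$ and is vacuous for $Q=1806$. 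Worse, to force $4224\cdot\sum_{p>p_J}\frac{1}{p(p-1)}<0.038$ you would need $p_J\approx 12000$, i.e., an exact inclusion--exclusion over well over a thousand progressions, which is precisely the kind of computation the paper declares unaffordable in its discussion of $Q_9$. The repair is to keep the condition $d\mid p-1$ when bounding the tail (as the paper's inequality $\delta(\mathcal{U}_{d,p_i}(Q))\le d/(p_i(p_i-1))$ implicitly does, since $\mathcal{U}_{d,p_i}(Q)=\emptyset$ otherwise): writing $p-1=dt$, the progression consists of multiples of $p(p-1)/d=t(dt+1)>dt^2$ with $t>(p_J-1)/d$, so each divisor $d\le p_J$ contributes at most about $1/(p_J-1)$, and divisors $d>p_J$ at most $\pi^2/(6d)$; the total tail is then of order $\tau(Q)/p_J$ rather than $\sigma(Q)/(p_J\log p_J)$, which is what makes the stated precision attainable with a feasible cutoff. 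Finally, your closing claim that the crude bound ``keeps all four intervals safely below $1$, so no sharper tail estimate is needed'' is a non sequitur: the proposition asserts the specific numerical intervals above, not merely $\delta(\mathfrak{N}_Q)<1$, and with your estimate those intervals are not obtained.
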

\begin{proof} 
For every $Q\in \{2,6,42,1806\}$, using Corollary \ref{COR:cor1} we have that
$$\mathbb{N}\setminus\mathfrak{N}_{Q}=\bigcup_{\substack{d\mid Q\\ p\textrm{ prime}}} \mathcal{U}_{d,p}(Q),$$
where
$$\mathcal{U}_{d,p}(Q):=\begin{cases}

    \left\{K\frac{p(p-1)}{d}:\:\ K\in\mathbb{N}\right\}, & \textrm{if $p\nmid Q$, $d \mid p-1$};\\

    \emptyset, & \textrm{otherwise}.\end{cases}$$
Then, the required computations are similar to those in the previous proof and can be performed with Mathematica using again the inclusion-exclusion principle.
\end{proof}

In the case of $Q_9:=8490421583559688410706771261086$ it does not seem feasible to apply the ideas and techniques from Proposition~\ref{PROP:DENS}, because the computations would require an unaffordable amount of time. Nevertheless, we can improve the bounds $0<\delta(\mathfrak{N}_{Q_9})<1$. Namely, this density by Proposition~\ref{PROP:BOUND} is less than $1/\mathfrak{n}_{Q_9}$, yielding $\delta(\mathfrak{N}_{Q_9})<10^{-30}$, and taking $y=Q_9\mathfrak{n}_{Q_9}$ in the proof of Theorem~\ref{TEOR:POS} gives $\delta(\mathfrak{N}_{Q_9})>1.2\times 10^{-53}$ from (\ref{fin}). To provide tighter bounds remains a possibility. 

The following table summarizes the main results about the known values of $Q$ for which $\mathfrak{N}_Q$ is non-empty.

\vspace{-1em}
\begin{center}
\begin{table}[ht]
\caption{Known weak primary pseudoperfect numbers $Q$ if $\mathfrak{N}_Q\neq\emptyset$}
\vspace{-1em}
\begin{tabular}{|c|c|c|}
\hline $Q$ & $\min\mathfrak{N}_Q$ & Bounds for $\delta(\mathfrak{N}_Q)$\\ \hline  1 & 1 & $1/2,1/2$\\ \hline  2 & 1 & $0.583874,0.584604$\\ \hline 6 & 1 & $0.70405,0.707659$\\ \hline 42 & 1 & $0.78215,0.79399$\\ \hline 1806 & 1 & $0.7747,0.812570$\\ \hline 47058 & 5 & $0.0560465,0.080057$\\ \hline  2214502422 & 5 & $0.0070565,0.080057$\\ \hline 8490421583559688410706771261086 & 39607528021345872635 & $1.2\times 10^{-53},10^{-30}$\\ \hline
\end{tabular}
\end{table} 
\end{center}
\vspace{-2em}

Observe that we have
$$1/2=\delta(\mathfrak{N}_1)<\delta(\mathfrak{N}_2)<\delta(\mathfrak{N}_6)<\delta(\mathfrak{N}_{42})<1.$$
However, we do not know whether the relation $\delta(\mathfrak{N}_{42})< \delta(\mathfrak{N}_{1806})$ holds.

In fact, $\delta(\mathfrak{N}_{Q})$ is not increasing with $Q$ since, for instance, $\delta(\mathfrak{N}_{47058})$ and $\delta(\mathfrak{N}_{2214502422})$ are each less than $\delta(\mathfrak{N}_1)$. On the other hand, if we observe that
\begin{equation*}
\mathfrak{n}_{1}=\mathfrak{n}_{2}=\mathfrak{n}_{6}=\mathfrak{n}_{42}=\mathfrak{n}_{1806}=1 \quad\text{and}\quad
\mathfrak{n}_{47058}=\mathfrak{n}_{2214502422}=5,
\end{equation*}
we can end the paper with the following prediction.

\begin{con}
If $Q$ and $ Q'$  are weak primary pseudoperfect numbers such that $\mathfrak{N}_{Q}$ and $\mathfrak{N}_{Q'}$ are non-empty and $\mathfrak{n}_{Q}<\mathfrak{n}_{Q'}$, then $\delta(\mathfrak{N}_{Q} ) > \delta(\mathfrak{N}_{Q'} )$.
\end{con}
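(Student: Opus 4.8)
The plan is to reduce the conjecture to a comparison of \emph{relative} densities and then to separate the two sides using the bounds already established. Since $\mathfrak{N}_Q\subseteq\mathfrak{n}_Q\mathbb{N}$ and, by Proposition~\ref{PROP:complement}, $\mathfrak{n}_Q\mathbb{N}\setminus\mathfrak{N}_Q=\bigcup_{d\mid Q}W_d(Q)$, all densities in sight exist (Proposition~\ref{PROP:BOUND}), so one may write $\delta(\mathfrak{N}_Q)=\frac{1}{\mathfrak{n}_Q}-\delta\bigl(\bigcup_{d\mid Q}W_d(Q)\bigr)$. Setting the relative density $r_Q:=\mathfrak{n}_Q\,\delta(\mathfrak{N}_Q)$, Theorem~\ref{TEOR:POS} and Proposition~\ref{PROP:BOUND} give $0<r_Q<1$, and the desired inequality $\delta(\mathfrak{N}_Q)>\delta(\mathfrak{N}_{Q'})$ becomes $r_Q\mathfrak{n}_{Q'}>r_{Q'}\mathfrak{n}_Q$. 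Because $\mathfrak{n}_Q<\mathfrak{n}_{Q'}$, a sufficient condition is simply $r_Q\ge r_{Q'}$; thus the heart of the matter is to show that the density of solutions \emph{relative} to $\mathfrak{n}_Q\mathbb{N}$ does not increase as $\mathfrak{n}_Q$ grows.

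For the eight known non-empty cases this can be settled directly, without the general statement, and here I note first that the hypothesis $\mathfrak{n}_Q<\mathfrak{n}_{Q'}$ is strict, so only cross-class comparisons occur: nothing needs to be shown for two values sharing the same $\mathfrak{n}$. The attainable values are $\mathfrak{n}_Q\in\{1,\,5,\,\mathfrak{n}_{Q_9}\}$ with $\mathfrak{n}_{Q_9}=39607528021345872635$, and these are widely separated. By Proposition~\ref{PROP:BOUND} any case with $\mathfrak{n}_{Q'}=5$ has $\delta(\mathfrak{N}_{Q'})<1/5$, whereas every case with $\mathfrak{n}_Q=1$ has $\delta(\mathfrak{N}_Q)\ge 1/2>1/5$ by the lower bounds recorded in the table; likewise $\delta(\mathfrak{N}_{Q_9})<1/\mathfrak{n}_{Q_9}<10^{-30}$ is dominated by the lower bound $0.0070565<\delta(\mathfrak{N}_{2214502422})$ from Proposition~\ref{PROP:DENS}, and by $1/2>10^{-30}$ in the remaining comparison. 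So it suffices to observe that the smallest lower bound in each $\mathfrak{n}$-class exceeds the upper bound $1/\mathfrak{n}'$ of every larger class, which the explicit figures already guarantee. This disposes of the conjecture for all presently known weak primary pseudoperfect numbers.

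For arbitrary weak primary pseudoperfect numbers the natural route is to upgrade Theorem~\ref{TEOR:POS} into a \emph{uniform} lower bound on the relative density, say $r_Q\ge c$ for an absolute constant $c>0$, and then to combine it with a gap estimate for the attainable values of $\mathfrak{n}_Q$. Concretely, writing $\delta(\mathfrak{N}_Q)=\frac{1}{\mathfrak{n}_Q}(1-E_Q)$, where $E_Q$ measures the density inside $\mathfrak{n}_Q\mathbb{N}$ of the integers removed by condition ii) of Theorem~\ref{THEO}, one would estimate $E_Q$ by an Eratosthenes-type sieve over the primes $p\nmid Q$ with $p-1\mid Q\mathfrak{n}_Q n$, exactly as in the proof of Theorem~\ref{TEOR:POS}. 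If one could show $E_Q$ stays bounded away from $1$ uniformly, and that two distinct attainable values $\mathfrak{n}_Q<\mathfrak{n}_{Q'}$ are always separated by a factor exceeding $1/(1-\sup_Q E_Q)$, then $\delta(\mathfrak{N}_Q)\ge(1-E_Q)/\mathfrak{n}_Q$ and $\delta(\mathfrak{N}_{Q'})<1/\mathfrak{n}_{Q'}$ would separate and the conjecture would follow.

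The main obstacle is twofold. First, and most seriously, the set of weak primary pseudoperfect numbers is unknown: only the eight primary pseudoperfect numbers together with $Q=1$ have been found, and even their finiteness is open, so a genuinely general argument cannot proceed by enumeration and must produce density estimates uniform in $Q$. Second, controlling $E_Q$ uniformly requires understanding the density of integers $n$ admitting a prime $p\nmid Q$ with $p\mid n$ and $p-1\mid Q\mathfrak{n}_Q n$, which depends delicately on how many primes $p$ have $p-1$ dividing a fixed highly composite modulus; the clean monotonicity statement ``$r_Q$ is non-increasing in $\mathfrak{n}_Q$'' is not obviously true and is not within reach of the sieve bound used in Theorem~\ref{TEOR:POS}. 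I therefore expect the full conjecture to remain open, while the version restricted to the known values is completely provable by the case analysis above.
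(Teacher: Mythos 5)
The statement you were asked to prove is the paper's closing \emph{conjecture}: the authors give no proof of it, and they state it precisely as a prediction extrapolated from the numerical data in their final table. So there is no paper proof to compare against, and your overall verdict --- that the general statement is not provable by present means and should remain open --- coincides with the paper's own position. Your verification for the known cases is correct and is essentially the same evidence the authors themselves assembled: among the known non-empty cases the attainable values of $\mathfrak{n}_Q$ are $1$, $5$, and $\mathfrak{n}_{Q_9}=39607528021345872635$; Proposition~\ref{PROP:BOUND} bounds each class above by $1/\mathfrak{n}_Q$, while the recorded lower bounds ($\delta(\mathfrak{N}_Q)\geq 1/2$ throughout the $\mathfrak{n}_Q=1$ class by \cite{GMO} and Proposition~\ref{PROP:DENS}'s companion result, $\delta(\mathfrak{N}_Q)>0.0070565$ in the $\mathfrak{n}_Q=5$ class, and $\delta(\mathfrak{N}_{Q_9})>1.2\times 10^{-53}$) each exceed the upper bound $1/\mathfrak{n}_{Q'}$ of every class with larger $\mathfrak{n}_{Q'}$. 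That three-way separation is exactly what motivated the conjecture, and your observation that only cross-class comparisons matter (the hypothesis $\mathfrak{n}_Q<\mathfrak{n}_{Q'}$ being strict) is also the reason the paper's admitted ignorance about $\delta(\mathfrak{N}_{42})$ versus $\delta(\mathfrak{N}_{1806})$ does not contradict the conjecture.

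The one thing to be explicit about is that none of this proves the statement as written: it quantifies over \emph{all} weak primary pseudoperfect numbers, a set that is not known (its finiteness is open, and a weak primary pseudoperfect number that is not primary pseudoperfect would have to exceed $10^{110}$), so enumeration cannot close it --- as you correctly acknowledge. Your proposed general program (a uniform lower bound $r_Q\geq c>0$ on the relative density $r_Q=\mathfrak{n}_Q\,\delta(\mathfrak{N}_Q)$, combined with a gap estimate between attainable values of $\mathfrak{n}_Q$) is reasonable, but neither ingredient is available: the sieve in the proof of Theorem~\ref{TEOR:POS} only yields, after optimizing $y$, a lower bound on $r_Q$ of rough size $1/\bigl(Q\log(Q\mathfrak{n}_Q)\bigr)$, which tends to $0$ as $Q$ grows and is therefore very far from uniform. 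So the correct assessment, matching the paper, is: the restricted statement for the nine known values is a (correct) finite verification, and the conjecture itself remains open.
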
 

\section*{Acknowledgements}

We are grateful to the anonymous referee for suggestions on asymptotic density and, especially, for the proof of Theorem \ref{TEOR:POS}. We thank Bernd Kellner for useful comments on several references.

\end{document}